\newtheorem{thm}{Theorem}[section]
\newtheorem{lem}[thm]{Lemma}
\theoremstyle{definition}
 \theoremstyle{remark}
\newtheorem{rem}[thm]{Remark}
\newtheorem{cor}[thm]{Corollary}
\newtheorem{ques}[thm]{Question}
\newtheorem{claim}[thm]{Claim}
\numberwithin{equation}{section}
\begin{document}

\def\frakl{{\mathfrak L}}
\def\frakg{{\mathfrak G}}
\def\bbf{{\mathbb F}}
\def\bbl{{\mathbb L}}
\def\bbz{{\mathbb Z}}
\def\bbr{{\mathbb R}}

\def\bvp{\bf{\varphi}}

\def\Der{\mbox{\rm Der}}
\def\Hom{\text{\rm Hom}}
\def\Ker{\text{\rm Ker}}
\def\Lie{\text{\rm Lie}}
\def\id{\mbox {\bf id}}
\def\det{\mbox{\rm det}}
\def\Lie{\mbox {\rm Lie}}
\def\Hom{\mbox {\rm Hom}}
\def\Aut{\mbox {{\rm Aut}}}
\def\Ext{\mbox {{\rm Ext}}}
\def\Coker{\mbox {{\rm Coker}}}
\def\dim{\mbox{{\rm dim}}}
\def\pr{\mbox{pr}}

\def\geqs{\geqslant}

\def\ba{{\mathbf a}}
\def\bd{{\mathbf d}}
\def\co{{\mathcal O}}
\def\cn{{\mathcal N}}
\def\cv{{\mathcal V}}
\def\cz{{\mathcal Z}}
\def\cq{{\mathcal Q}}
\def\cf{{\mathcal F}}
\def\cc{{\mathcal C}}
\def\ca{{\mathcal A}}

\def\ggg{{\frak g}}
\def\lll{{\frak l}}
\def\hhh{{\frak h}}
\def\nnn{{\frak n}}
\def\sss{{\frak s}}
\def\bbb{{\frak b}}
\def\ccc{{\frak c}}
\def\ooo{{\mathfrak o}}
\def\ppp{{\mathfrak p}}
\def\uuu{{\mathfrak u}}

\def\p{{[p]}}
\def\modf{\text{{\bf mod}$^F$-}}
\def\modr{\text{{\bf mod}$^r$-}}

\title[Irreducibility of
Parabolic Baby Verma Modules] {A Criterion for Irreducibility of
Parabolic Baby Verma Modules of Reductive Lie Algebras}
\author{Yi-Yang Li, Bin Shu and Yu-Feng Yao}
\address{School of Fundamental Studies, Shanghai University of Engineering Science,
Shanghai 201620, China.}\email{yiyang$\_$li1979@aliyun.com}
\address{Department of Mathematics, East China Normal University,
Shanghai 200241,  China.} \email{bshu@math.ecnu.edu.cn}
\address{Department of Mathematics, Shanghai Maritime University, Shanghai 201306,
China.}\email{yfyao@shmtu.edu.cn} \subjclass[2000]{17B10; 17B20;
17B35; 17B50} \keywords{parabolic baby Verma module, nilpotent
orbit, sub-regular nilpotent orbit, support variety, standard Levi
form}
\thanks{This work is supported by the National Natural Science Foundation of China (Grant Nos. 11201293 and 11271130) and the Innovation Program of Shanghai Municipal Education Commission (Grant Nos. 12ZZ038 and 13YZ077).}

\begin{abstract} Let $G$ be a connected, reductive algebraic group over
an algebraically closed field $k$ of prime characteristic $p$ and
$\ggg=\Lie(G)$. In this paper, we study representations of $\ggg$
with a $p$-character $\chi$ of standard Levi form. When
$\ggg$ is of type $A_n, B_n, C_n$ or $D_n$, a sufficient condition
for the irreducibility of standard parabolic baby Verma
$\ggg$-modules is obtained. This partially answers a question raised
by Friedlander and Parshall in [Friedlander E. M. and Parshall B.
J., {\em Deformations of Lie algebra representations}, Amer. J.
Math. 112 (1990), 375-395]. Moreover, as an application, in the
special case that $\ggg$ is of type $A_n$ or $B_n$, and $\chi$ lies
in the sub-regular nilpotent orbit, we recover a result of Jantzen
in [Jantzen J. C., {\em Subregular nilpotent representations of
$sl_n$ and $so_{2n+1}$}, Math. Proc. Cambridge Philos. Soc. 126
(1999), 223-257].
\end{abstract}

\maketitle
\section{Introduction and main results}
The modular representations of reductive Lie algebras in prime characteristic
have been developed over the past decades with intimate connections to algebraic groups (cf. \cite{KW}, \cite{FP1, FP4}, \cite{Jan1, Jan2}, \cite{Pr}, \cite{LS}, \cite{YSL} etc.).

Let $k$ be an algebraically closed field of prime characteristic $p$
and $G$ be a connected, reductive algebraic group over $k$ with
$\ggg=\hbox{Lie}(G)$. Fix a maximal torus $T$ of $G$ and let $X(T)$
be the character group of $T$. Assume that the derived group
$G^{(1)}$ of $G$ is simply connected, $p$ is a good prime for the
root system of $\ggg$, and $\ggg$ has a non-degenerated
$G$-invariant bilinear form. Associated with any given linear form
$\chi\in\ggg^*$, the $\chi$-reduced enveloping algebra
$U_{\chi}(\ggg)$ is defined to be the quotient of the universal
enveloping algebra  $U(\ggg)$ by the ideal generated by all
$x^p-x^{[p]}-\chi(x)^p$ with $x\in \frak{g}$. Each isomorphism class
of irreducible representations of $\ggg$ corresponds to a unique
$p$-character $\chi$. Furthermore, a well-known result of
Kac-Weisfeiler shows that there is a Morita equivalence between
$U_{\chi}(\ggg)$-module category and $U_{\chi}(\frak l)$-module
category, where $\frak l$ is a certain reductive subalgebra of
$\ggg$ such that $\chi|_{[\frak l,\,\frak l]}$ is nilpotent (cf.
\cite{KW} and \cite{FP1}). This enables us to study
representations of $U_{\chi}(\ggg)$ just with nilpotent $\chi$.

We say a $p$-character $\chi$ has
standard Levi form if $\chi$ is nilpotent and if there exists a
subset $I$ of $\Pi$ such that $\chi (\ggg _{-\alpha})\neq 0$ for
$\alpha \in{I} $ and $\chi (\ggg _{-\alpha})= 0$ for $\alpha \in{R^+
\backslash I}$, where $\Pi$ is the set of all simple roots, $R^+$ is
the set of all positive roots and $|R^+|=N$ (cf. \cite[\S10]{Jan1}).
In this paper, we study representations of $\ggg$ with a $p$-character
$\chi$ of standard Levi form. Fix a triangular decomposition $\ggg=\frak n^-\oplus\frak
h\oplus\frak n^+$ and let $\frak b^+=\frak h\oplus\frak n^+$. For a subset $I$ in $\Pi$, set
$J=\Pi\setminus I$. Let $\ggg_J=\frak n_J^-\oplus\frak h\oplus\frak
n_J^+$ and $\frak p_J=\ggg_J\oplus \frak u_J^+$ be the Levi
subalgebra and the parabolic subalgebra of $\ggg$ corresponding to
$J$, respectively. Denote by $\widehat{L}_{\frak \ggg_J}(\lambda)$ the
irreducible $X(T)/\mathbb{Z}I$-graded $U_\chi(\ggg_J)$-module with
``highest" weight $\lambda$ (note that
$U_\chi(\ggg_J)=U_0(\ggg_J)$). Then $\widehat{L}_{\frak
\ggg_J}(\lambda)$ can be extended to a $U_\chi(\ppp_J)$-module with
trivial $\frak u_J^+$-action. The induced module
$U_\chi(\frak{g})\otimes_{U_\chi(\frak{p}_J)}\widehat{L}_{\frak
p_J}(\lambda)$ is called a parabolic baby Verma module, denoted by $\widehat{\cz}_P(\lambda)$.

In \cite[\S\,5.1]{FP4},  Friedlander and Parshall put forward the
following open question:
\begin{ques}\label{FP's Q} Can one give necessary and
sufficient conditions on an irreducible module for a parabolic
subalgebra $\ppp_J$ to remain irreducible upon induction to $\ggg$?
\end{ques}

When $\chi$ is regular nilpotent, Friedlander-Parshall
answered this question in \cite{FP1}. They showed that all such
inductions remain irreducible.  When $\ggg$ is of type $A_2$, and
$\chi(\neq0)$ is of standard Levi form, then each irreducible $\ggg$-module
with a $p$-character $\chi$ is a parabolic baby Verma module. Quite recently, the authors of the present paper  
obtained a necessary and sufficient condition for irreducibility of parabolic baby Verma modules of $\frak{sl}(4, k)$ in \cite{LSY}.

Let $C_0=\{\lambda\in X(T)_{\mathbb R}\mid
0\leq\langle\lambda+\rho,\alpha^{\vee}\rangle< p \mbox{ for all }\alpha\in
R^+ \}$ be the first dominant alcove of $X(T)_{\mathbb R}$. Let $X_1(T)=\{\lambda\in X(T)\mid
0\leq\langle\lambda+\rho,\alpha^{\vee}\rangle< p \mbox{ for all }\alpha\in
\Pi \}$ and $X'_1(T)\subset X_1(T)$ be a system of representatives
for $X(T)/pX(T)$. Each $\lambda\in X(T)$ has a unique
decomposition $\lambda=\lambda_0+p\lambda_1$ with $\lambda_0\in
X'_1(T)$ and $\lambda_1\in X(T)$ (cf. \cite[II \S\,9.14]{Jan3}). For
each $\lambda\in X(T)$, the map $\lambda\mapsto d\lambda$ induces a
bijection $X(T)/pX(T)\cong \Lambda=\{\mu\in\hhh^*\mid
\mu(h)^p-\mu(h^{[p]})=0,\,\forall\,h\in\hhh\}$ (cf. \cite[\S\,11.1]{Jan1}). So
we can regard $X'_1(T)$ as a system of representatives for
$\Lambda$. We call $\lambda\in X(T)$ $p$-regular if the
stabilizer of $\lambda$ in $W_p$ is trivial, where $W_p$ is the affine Weyl group of $\ggg$.

In the present paper, we give a sufficient condition on the irreducibility
of some parabolic baby Verma modules, which partially answers
Question \ref{FP's Q}.

\begin{thm}\label{main thm}
Let $\frak g$ be of type $A_n, B_n, C_n$ or $D_n$ satisfying the
hypotheses (H1)-(H3) in Section 2.1. Let $\lambda=\lambda_0+p\lambda_1\in X(T)$ such that $\lambda_0\in
C_0$ and $\lambda_1\in X(T)$. Let $\chi\in\ggg^*$ be of
standard Levi form and  $I=\{\alpha\in\Pi\mid
\chi(\ggg_{-\alpha})\neq 0\}$ in the Dynkin diagram of $\ggg$ is one of
the following forms
\begin{equation*}
\begin{cases}
\text{\rm(i)\,\,or\,\, (ii)}, & \text{\rm for}\,\, \ggg=A_n,\\
\text{\rm(iii)}, & \text{\rm for}\,\,\ggg=B_n,\\
\text{\rm(iv)}, & \text{\rm for}\,\, \ggg=C_n,\\
\text{\rm(v)\,\,or\,\, (vi)}, & \text{\rm for}\,\,\ggg=D_n,\\
\end{cases}
\end{equation*}
where
$${\rm (i)}\quad
\underbrace{\bullet-\cdot\cdot\cdot-\bullet-\bullet}\limits_{I}-\circ-\cdot\cdot\cdot-\circ,$$\vspace{1mm}

$${\rm(ii)}\quad \circ-\circ-\cdot\cdot\cdot-\underbrace{\bullet-\cdot\cdot\cdot-\bullet-\bullet}\limits_{I}.$$\vspace{1mm}

$${\rm (iii)}\quad\circ-...-\circ-\underbrace{\bullet-...-\bullet\Rightarrow\bullet}_{I}.$$\vspace{1mm}

$${\rm (iv)}\quad\underbrace{\bullet-\bullet-...-\bullet}_{I}-\circ-...-\circ\Leftarrow\circ.$$\vspace{1mm}

\[{\rm (v)}\quad\begin{matrix}\circ&\cdot\cdot\cdot&-\circ-&\bullet&-
&\cdot\cdot\cdot&-&\bullet&-&\bullet\\&&&&&&&|&&\\&&&&&&&\bullet&&\end{matrix}\]\vspace{1mm}

\[{\rm (vi)}\quad\begin{matrix}\bullet-&\cdot\cdot\cdot&-&\bullet&
-&\cdot\cdot\cdot&-&\bullet&-&\bullet\\&&&&&&&|&&\\&&&&&&&\circ&&\end{matrix}\]
Set $J=\Pi\setminus I$. Then the parabolic baby Verma module
$\widehat{\cz}_P(\lambda)$ is irreducible, provided that $\lambda$ is $p$-regular.
\end{thm}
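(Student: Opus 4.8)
\smallskip
\noindent\emph{Proof strategy.} The plan is to reduce the statement to an equality of dimensions, and then to establish that equality using the special shape of $I$ in each of the cases (i)--(vi).

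First I would exhibit $\widehat{\cz}_P(\lambda)$ as a quotient of an ordinary baby Verma module. By transitivity of induction, $U_\chi(\ggg)\otimes_{U_\chi(\bbb^+)}k_\lambda\cong U_\chi(\ggg)\otimes_{U_\chi(\ppp_J)}\bigl(U_0(\ggg_J)\otimes_{U_0(\bbb_J^+)}k_\lambda\bigr)$, and applying the right exact functor $U_\chi(\ggg)\otimes_{U_\chi(\ppp_J)}-$ to the canonical surjection $U_0(\ggg_J)\otimes_{U_0(\bbb_J^+)}k_\lambda\twoheadrightarrow\widehat{L}_{\ggg_J}(\lambda)$ yields a surjection $\widehat{\cz}_B(\lambda)\twoheadrightarrow\widehat{\cz}_P(\lambda)$, where $\widehat{\cz}_B(\lambda)=U_\chi(\ggg)\otimes_{U_\chi(\bbb^+)}k_\lambda$ is the baby Verma module. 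Since $\widehat{\cz}_B(\lambda)$ has a unique maximal submodule with simple head $\widehat{L}_\chi(\lambda)$, and the surjection does not vanish on the $\lambda$-weight space, $\widehat{\cz}_P(\lambda)$ again has simple head $\widehat{L}_\chi(\lambda)$; moreover, by the linkage principle for $p$-characters of standard Levi form together with the $p$-regularity of $\lambda$ and $\lambda_0\in C_0$, every composition factor of $\widehat{\cz}_P(\lambda)$ is of the form $\widehat{L}_\chi(w\cdot\lambda)$ with $w$ in the Weyl group $W_I$ of $I$. Hence $\widehat{\cz}_P(\lambda)$ is irreducible if and only if $\dim\widehat{L}_\chi(\lambda)=\dim\widehat{\cz}_P(\lambda)$. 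One side is immediate: since $\chi|_{\ggg_J}=0$ and $\lambda_0\in C_0$, the module $\widehat{L}_{\ggg_J}(\lambda)$ is a simple module of Weyl type for the derived subalgebra of $\ggg_J$, so
\[\dim\widehat{\cz}_P(\lambda)=p^{\,N-|R_J^+|}\prod_{\alpha\in R_J^+}\frac{\langle\lambda_0+\rho,\alpha^\vee\rangle}{\langle\rho,\alpha^\vee\rangle}.\]

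The remaining task, and the heart of the matter, is the reverse inequality $\dim\widehat{L}_\chi(\lambda)\geqs\dim\widehat{\cz}_P(\lambda)$, equivalently, that no $\widehat{L}_\chi(w\cdot\lambda)$ with $1\neq w\in W_I$ occurs as a composition factor of $\widehat{\cz}_P(\lambda)$. Premet's theorem only gives $p^{\,N-|R_I^+|}=p^{(\dim G\cdot\chi)/2}\mid\dim\widehat{L}_\chi(\lambda)$, which does not suffice on its own; I would close the gap by exploiting that in each of (i)--(vi) the set $I$ is a connected segment at an end of the Dynkin diagram, so that $\uuu_I^{\pm}$, $\lll_I$, $\uuu_J^{\pm}$ and $\ggg_J$ sit in explicit relative position. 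Concretely, I would analyse the $X(T)/\bbz I$-grading on $\widehat{\cz}_P(\lambda)$: its top graded component (degree $\bar\lambda$) can be described in terms of $\widehat{L}_{\ggg_J}(\lambda)$ and $U_\chi(\uuu_I^-\cap\uuu_J^-)$, and a hypothetical primitive vector spanning a proper submodule would be forced into the graded component of degree $\overline{w\cdot\lambda}$ for some $1\neq w\in W_I$, only to find that this component is already spanned by $U_\chi(\nnn^-)v_\lambda$. The matching lower bound for $\dim\widehat{L}_\chi(\lambda)$ I would obtain either by applying Premet's inequality inside a suitable intermediate reductive subalgebra (chosen so that its nilpotent orbit dimension accounts for the extra factor $\prod_{\alpha\in R_J^+}\langle\lambda_0+\rho,\alpha^\vee\rangle/\langle\rho,\alpha^\vee\rangle$), or by an induction on the rank $n$ that peels off one node of the diagram at a time; hypotheses (H1)--(H3) would enter here, to ensure that Weyl modules for $\ggg_J$ stay simple and that the support-variety/Premet machinery is available. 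The main obstacle is precisely this control of the composition factors $\widehat{L}_\chi(w\cdot\lambda)$, $w\in W_I$: in general these are governed by (affine) Kazhdan--Lusztig combinatorics and the question is open, and the whole point of singling out the configurations (i)--(vi) is that for them this combinatorics degenerates and the two dimensions can be matched by an explicit count.
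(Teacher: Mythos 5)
Your opening reduction is unobjectionable but it is only a reformulation of the problem, and the step that actually carries the content of the theorem is missing. Concretely: (a) your claim that every composition factor of $\widehat{\cz}_P(\lambda)$ is of the form $\widehat{L}_\chi(w.\lambda)$ with $w\in W_I$ does not follow from the linkage principle, which in the graded category only places the highest weights in the full affine orbit $W_p.\lambda$; (b) the matching lower bound $\dim\widehat{L}_\chi(\lambda)\geqs p^{\,N-|R_J^+|}\dim\widehat{L}_{\ppp_J}(\lambda)$ cannot come from Premet's theorem, which yields only a power of $p$ dividing $\dim\widehat{L}_\chi(\lambda)$ and can never account for the non-$p$-power factor $\prod_{\alpha\in R_J^+}\langle\lambda_0+\rho,\alpha^\vee\rangle/\langle\rho,\alpha^\vee\rangle$, and ``Premet inside an intermediate reductive subalgebra'' or ``induction peeling off a node'' are not arguments; (c) the grading remark that a hypothetical primitive vector lands in a component ``already spanned by $U_\chi(\nnn^-)v_\lambda$'' is vacuous, since $\widehat{\cz}_P(\lambda)$ is generated by $v_\lambda$ and every graded component has this property, so no contradiction is produced. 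In effect you have reduced the theorem to computing $\dim\widehat{L}_\chi(\lambda)$ for standard Levi form $\chi$, which is exactly the hard (in general open, Kazhdan--Lusztig-governed) problem, and you acknowledge this without supplying the promised ``explicit count'' for the configurations (i)--(vi). That is a genuine gap, not a routine omission.

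For comparison, the paper proves the theorem without any dimension formula for $\widehat{L}_\chi(\lambda)$. The translation functor (Lemma \ref{T1}) reduces everything to $\lambda=(0,\dots,0)$, where $\widehat{L}_{\ppp_J}(\lambda)$ is trivial and $\widehat{\cz}_P(\lambda)\cong U_\chi(\uuu_J^-)$ as vector spaces; since every nonzero submodule contains a maximal weight vector (Lemma \ref{lem1}), one assumes a maximal vector $u\otimes v_\lambda$ generates a proper submodule and analyzes its expression in an explicitly ordered PBW basis of $U_\chi(\uuu_J^-)$. Lemmas \ref{lem2} and \ref{lem3}, using the connectedness of $I$ at an end of the diagram and direct commutator computations with the $x_\gamma$, force the offending vector step by step into the top component $\widehat{\cz}_P(\lambda)^{\lambda+\mathbb{Z}I}\cong_{\ggg_I}\widehat{L}_\chi(\lambda)^{\lambda+\mathbb{Z}I}$, contradicting properness (this is Claim \ref{a claim}); the other types are handled by the same scheme with adapted orderings. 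If you want to rescue your outline, you must replace the dimension-matching step by an argument of this explicit kind (or by an actual computation of $\dim\widehat{L}_\chi(\lambda)$ in these cases), since everything before it is equivalent to the statement you are trying to prove.
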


As a consequence of Theorem \ref{main thm}, we have

\begin{cor}\label{cor1} Maintain the notations as in Theorem \ref{main thm}. The following statements hold.
\begin{itemize}
\item[(1)] Assume that $\frak g$ is  of type  $A_n$, and that $\chi\in\ggg^*$ is
sub-regular nilpotent and has standard Levi form. Then each
irreducible $\ggg$-module is a parabolic baby Verma module. 

\item[(2)] Assume that $\frak g$ is of type $B_n$, and that $\chi\in\ggg^*$ is
sub-regular nilpotent and has standard Levi form. Let $\{\widehat
L_\chi(\lambda_i)\mid 1\leq i\leq 2n\}$ be the set of isomorphism
classes of simple $\ggg$-modules in the block containing $\widehat
L_\chi(\lambda_1)$ described as in \cite[Proposition 3.13]{Jan4}.
Then $\widehat{\cz}_P(\lambda_i)$, $i\neq n,2n$, is irreducible with dimension
$r_{i}p^{N-1}$ for $1\leq i\leq n-1$ and $r_{2n-i}p^{N-1}$ for
$n+1\leq i\leq 2n-1 $.
\end{itemize}
\end{cor}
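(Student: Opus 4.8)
The plan is to derive both parts of the corollary from Theorem~\ref{main thm}, together with the general principle that the simple head of a parabolic baby Verma module is the associated simple module, and with Jantzen's analysis of the subregular case in \cite{Jan4}. The common first step is to pin down $I=\{\alpha\in\Pi:\chi(\ggg_{-\alpha})\neq0\}$: since $\chi$ has standard Levi form, its dual element $\sum_{\alpha\in I}c_\alpha e_{-\alpha}$ is a regular nilpotent element of the Levi subalgebra $\lll_I$, so the $G$-orbit of $\chi$ is the orbit of a regular nilpotent of $\lll_I$, whose Jordan type in the classical realization is determined by the connected components of $I$ in the Dynkin diagram. Imposing that this orbit be the subregular one forces, for $\ggg=A_n$, that $I$ is a single connected chain of $n-1$ nodes containing an endpoint (so $I$ is of form (i) or (ii) and $J=\Pi\setminus I$ is a single vertex), and, for $\ggg=B_n$, that $I$ is the $B_{n-1}$ subdiagram $\{\alpha_2,\dots,\alpha_n\}$ at the short-root end (so $I$ is of form (iii) and $J=\{\alpha_1\}$). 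In either case $\ggg_J=\lll_J$ is reductive with semisimple part $\mathfrak{sl}_2$ and $R_J^+=\{\alpha_J\}$, so $\widehat L_{\ggg_J}(\lambda)$ is, up to a twist by a character of the central torus, the restricted simple $\mathfrak{sl}_2$-module of highest weight $\langle\lambda,\alpha_J^\vee\rangle$ reduced modulo $p$, and $\dim\widehat{\cz}_P(\lambda)=\dim\widehat L_{\ggg_J}(\lambda)\cdot p^{N-1}$; when $\lambda_0\in C_0$ is $p$-regular this reads $\dim\widehat{\cz}_P(\lambda)=\langle\lambda+\rho,\alpha_J^\vee\rangle\, p^{N-1}$ with $\langle\lambda+\rho,\alpha_J^\vee\rangle\in\{1,\dots,p-1\}$.

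For part (1), recall that for $\chi$ of standard Levi form every simple $U_\chi(\ggg)$-module is $\widehat L_\chi(\lambda)$ for a suitable $\lambda$, and that $\widehat L_\chi(\lambda)$ is the simple head of $\widehat{\cz}_P(\lambda)$ (the latter being a nonzero quotient of the baby Verma $\widehat{\cz}_\chi(\lambda)$ induced from the Borel, hence sharing its simple head; cf.\ \cite[\S\S 10--11]{Jan1}). Thus, whenever $\widehat{\cz}_P(\lambda)$ is irreducible it equals $\widehat L_\chi(\lambda)$, and by Theorem~\ref{main thm} this holds for every $p$-regular $\lambda$ with $\lambda_0\in C_0$. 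It then remains to observe that, in type $A_n$ with $\chi$ subregular, each simple $U_\chi(\ggg)$-module in a $p$-regular block is of the form $\widehat L_\chi(\lambda)$ for such a $\lambda$: this is read off from Jantzen's description of these simple modules in \cite{Jan4}, where, in contrast to the type $B_n$ situation below, none of the relevant highest weights is pushed onto an alcove wall. Hence every irreducible $U_\chi(\ggg)$-module coincides with some parabolic baby Verma module $\widehat{\cz}_P(\lambda)$.

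For part (2), I would start from \cite[Proposition~3.13]{Jan4}, which lists the $2n$ simple modules $\widehat L_\chi(\lambda_i)$, $1\le i\le 2n$, in the block of $\widehat L_\chi(\lambda_1)$ together with their highest weights. The next step is to decide which $\lambda_i$ satisfy the hypotheses of Theorem~\ref{main thm}: the claim is that, after normalising $\lambda_i$ within its linkage class, $\lambda_i$ is $p$-regular with $(\lambda_i)_0\in C_0$ precisely for $i\neq n,2n$, the two exceptional weights $\lambda_n$ and $\lambda_{2n}$ lying on walls. For $i\neq n,2n$, Theorem~\ref{main thm} then gives that $\widehat{\cz}_P(\lambda_i)$ is irreducible, so $\widehat{\cz}_P(\lambda_i)=\widehat L_\chi(\lambda_i)$, and the dimension computation of the first paragraph yields $\dim\widehat{\cz}_P(\lambda_i)=\langle\lambda_i+\rho,\alpha_1^\vee\rangle\, p^{N-1}$. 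Finally one inserts Jantzen's explicit $\lambda_i$ and checks that $\langle\lambda_i+\rho,\alpha_1^\vee\rangle=r_i$ for $1\le i\le n-1$ and $=r_{2n-i}$ for $n+1\le i\le 2n-1$, which recovers Jantzen's dimension formula for these modules.

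I expect the main obstacle to be exactly this reconciliation of parametrizations in part (2) (and its analogue in part (1)): matching Jantzen's combinatorial bookkeeping of the weights $\lambda_i$ in \cite{Jan4} with the alcove/highest-weight language used here, correctly isolating the two exceptional indices $i=n,2n$ for which $\widehat{\cz}_P(\lambda_i)$ fails to be irreducible, and evaluating $\langle\lambda_i+\rho,\alpha_1^\vee\rangle$ in closed form so as to identify it with $r_i$. Carrying all of this out also uses hypotheses (H1)--(H3) to guarantee that $p$ is large enough that each $\widehat L_{\ggg_J}(\lambda_i)$ is a genuine restricted $\mathfrak{sl}_2$-module of the predicted dimension, and that in type $A_n$ every simple module in a $p$-regular block admits a representative weight lying in $C_0$.
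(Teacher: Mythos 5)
Your overall strategy---feeding Jantzen's block data into Theorem~\ref{main thm} and computing $\dim\widehat{\cz}_P(\lambda)=\dim\widehat L_{\ggg_J}(\lambda)\cdot p^{N-1}$ with $\ggg_J$ of semisimple rank one---is the same as the paper's, and your part (1) follows the paper's route (though the actual content there, namely the computation $\lambda_i=\sigma^i.\lambda_0$, $\sigma=s_1\cdots s_n$, giving $\lambda_i+\rho=(r_{n-i+2},\dots,r_n,-(r_1+\cdots+r_n),r_1,\dots,r_{n-i})$ and hence $\lambda_{i,0}\in C_0$, is exactly what you defer to ``read off from Jantzen''). The genuine gap is in part (2), in the step you yourself flag as the main obstacle. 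All the weights $\lambda_i=w_i.\lambda_1$ with $w_i\in W$ lie in the $W_p$-dot-orbit of the $p$-regular weight $\lambda_1$, so every $\lambda_i$, including $\lambda_n$ and $\lambda_{2n}$, is $p$-regular: your claim that the two exceptional weights ``lie on walls'' is false, and singularity cannot be the mechanism that isolates $i=n,2n$ (the same misconception appears in your aside in part (1)). Moreover, ``normalising $\lambda_i$ within its linkage class'' is not a legitimate move: distinct weights in one linkage class parametrize the $2n$ pairwise non-isomorphic simples of the block, so such a normalisation changes the module. The only admissible normalisations are by $W_I$, via $\widehat L_\chi(w.\lambda)\cong\widehat L_\chi(\lambda)$ for $w\in W_I$ (\cite[Proposition 11.9]{Jan1}), and by shifts in $pX(T)$. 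This $W_I$-step is the key ingredient of the paper's proof and is absent from your plan: the paper passes from $\lambda_i$ (whose first coordinate of $\lambda_i+\rho$ is negative, e.g.\ $-r_1$ for $i=2$, so your pairing $\langle\lambda_i+\rho,\alpha_1^\vee\rangle=r_i$ has nothing to evaluate as stated) to explicit representatives $\lambda_i'=s_2\cdots s_i.\lambda_i$, resp.\ $s_2\cdots s_ns_{n-1}\cdots s_{2n+1-i}.\lambda_i$, whose first coordinate is $r_i$, resp.\ $r_{2n-i}$.

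A second, related problem: even after this normalisation one does not land in the situation of Theorem~\ref{main thm} itself, i.e.\ $(\lambda_i')_0\in C_0$ is not what gets verified; the paper instead invokes Remark~\ref{reamrk B}, the translation-principle extension of the theorem to the alcoves containing the weights $(1,\dots,1,m_s,\dots,m_n)-\rho$, to conclude that $\widehat{\cz}_P(\lambda_i')$ is irreducible of dimension $r_ip^{N-1}$ (resp.\ $r_{2n-i}p^{N-1}$) for $i\neq n,2n$. So your criterion ``$p$-regular with $(\lambda_i)_0\in C_0$ precisely for $i\neq n,2n$'' is both unproved and not the dichotomy actually at work: the indices $n,2n$ are set aside simply because no $W_I$-representative of those weights satisfies the hypotheses of the theorem or of Remark~\ref{reamrk B} (consistently with the fact that Jantzen's two exceptional simples do not have dimension of the form $rp^{N-1}$), not because those weights are singular. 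Finally, note that your closing sentence in part (1) extends the conclusion from the blocks you treated to all irreducible $U_\chi(\ggg)$-modules; as in the paper, the argument via Theorem~\ref{main thm} only covers weights that are $p$-regular with $\lambda_0\in C_0$, so this extension should at least be acknowledged rather than asserted.
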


\begin{rem} Corollary \ref{cor1} coincides with the results by Jantzen in \cite[Theorem 2.6, Proposition 3.13]{Jan4}.
\end{rem}

\section{Preliminaries}
\subsection{Notations and assumptions}
Throughout this paper, we always assume that $k$ is an algebraically
closed field of prime characteristic $p$.  We use notations in
\cite{Jan1}.

 Let $G$ be a connected, reductive algebraic group over $k$ and $\ggg=\hbox{Lie}(G)$.
 Then $\ggg$ carries a natural restricted mapping $[p]$: $x\mapsto x^{[p]}$.
We assume that the following
 three hypotheses are  satisfied
(\cite[\S\,6.3]{Jan1}):
 \begin{enumerate}
  \item [(H1)] The derived group $\mathcal DG$ of $G$ is simply connected;
  \item [(H2)] The prime $p$ is good for $\ggg$;
  \item [(H3)] There exists a $G$-invariant non-degenerate bilinear form on
  $\ggg$.
\end{enumerate}

Let $T$ be a maximal torus of $G$ and $X(T)$ be the character group
of $T$. Denote respectively by $R^\pm$ the sets of all positive
roots and all negative roots. For each $\alpha \in R$, let
$\ggg_\alpha$ denote the root subspace of $\ggg$ corresponding to
$\alpha$ and $\frak n^+=\sum_{\alpha \in R^+}\ggg_\alpha, \frak
n^-=\sum_{\alpha \in R^-}\ggg_\alpha$. We have the triangular
decomposition: $\ggg=\frak n^+\oplus \frak h\oplus\frak n^-$ with
$\frak h$ being the Cartan subalgebra of $\ggg$ with rank $l$. Take
a Chevalley basis $\{x_{\alpha}, y_{\alpha}, h_i\mid \alpha\in R^+,
1\leq i\leq l\}$ of $\ggg$. Let $\frak b^+=\frak h\oplus \frak n^+$
be the Borel subalgebra of $\ggg$. For each $\alpha\in R$, let
$\alpha ^{\vee}$ denote the coroot of $\alpha$, $W$ the Weyl group
generated by all $s_\alpha$ with $ \alpha \in R $, and $W_p$ the
affine Weyl group generated by $s_{\alpha,r}\, (r\in \mathbb{Z})$,
where $s_{\alpha,r}$ is the affine reflection defined by
$s_{\alpha,r}(\mu)=\mu-(\langle\mu,\alpha^{\vee}\rangle-rp)\alpha$
for any $\mu\in\frak{h}^*$. Define the dot action of $w$ on
$\lambda$ by $w.\lambda=w(\lambda+\rho)-\rho$ for $w\in W$ and
$\lambda\in\hhh^*$, where $\rho$ is half the sum of all positive
roots.

\subsection{Baby Verma modules} Modulo Morita equivalence of representations,
we can assume that $\chi(\bbb^+)=0$ without loss of generality (cf. \cite{KW, FP1}). Set
$\Lambda:=\{\lambda \in \hhh^*\mid \lambda(h)^p=
\lambda(h^{[p]})\}$. Any simple $U_0(\hhh)$-module corresponds to a
unique $\lambda\in\Lambda$ (cf. \cite{Jan1}) and is one-dimensional,
denoted by $k_\lambda=kv_{\lambda}$, with $h\cdot
v_{\lambda}=\lambda(h)v_{\lambda}$ for any $h\in \hhh$. Since
$k_\lambda$ can be extended to a $U_0(\bbb^+)$-module with trivial
$\nnn^+$-action, we have an induced module
${Z}_{\chi}(\lambda)=U_{\chi}(\ggg)\otimes_{U_{0}(\frak b^+)}
k_{\lambda}$ which is called a baby Verma module. Each simple
$U_{\chi}(\ggg)$-module is the homomorphic image of some baby Verma
module ${Z}_{\chi}(\lambda),\lambda\in\Lambda$ (cf. \cite{Jan1} or
\cite{Jan2}).

\subsection{Standard Levi forms}\label{SLF}  We say a
$p$-character $\chi$ has standard Levi form if $\chi$ is nilpotent
and if there exists a subset $I$ of all simple roots such
that

\begin{equation}
 \chi (\ggg _{-\alpha})=\begin{cases}
      \neq 0, & \text{if $\alpha \in{I} $,}\\
      0,  & \text{if $\alpha \in{R^+ \backslash I}$}.
    \end{cases}
\end{equation}
As in \cite[\S\,10.4; \S\,10.5]{Jan1}, when $I$ is the full set of
all simple roots, we call $\chi$ a regular nilpotent element in
$\ggg^*$. When $I=\varnothing$, then $\chi=0$. We denote by $R_I$
the root system corresponding to the subset $I$, and
$W_I$ the Weyl group generated by all the $s_\alpha$ with $\alpha \in I$.

\subsection{Graded module category} Assume that $\chi$ is of standard Levi form with $I=\{\alpha\in\Pi\mid
\chi(\ggg_{-\alpha})\neq 0\}$. Following Jantzen \cite[\S\,11]{Jan1}, we can define a refined subcategory of the $U_\chi(\ggg)$-module category, which is the $X(T)/\mathbb ZI$-graded $U_\chi(\ggg)$-module
category, denoted by $\mathcal C$. For $\lambda\in
X(T)$, the graded baby Verma module $\widehat{{Z}}_{\chi}(\lambda)$ has a unique irreducible quotient, denoted by $\widehat{L}_\chi(\lambda)$. The latter is also a graded simple module.

\subsection{Parabolic baby Verma module}\label{S2.5}Assume that $\chi$ has standard Levi form
associated with a subset $I$ of the full set $\Pi$ of simple roots.
Set $J=\Pi\setminus I$. Let $\frak g_J=\frak h \oplus
\bigoplus_{\alpha\in R\cap\mathbb ZJ}\frak g_{\alpha}$, $\frak
u_J^+=\bigoplus_{\alpha>0,\alpha\notin \mathbb ZJ}\frak g_{\alpha}$, and
$\frak p_J=\frak g_J\oplus \frak u_J^+$. Then $U_\chi(\frak
p_J)=U_0(\frak p_J)$.  For $\mu\in X(T)$, let $\widehat L_{\frak
p_J}(\mu)$ be the graded irreducible $U_0(\frak p_J)$-module, which
is indeed a graded irreducible $U_0(\frak \ggg_J)$-module with
trivial $u_J^+$-action. The parabolic baby Verma module is
defined as the following induced module
$$\widehat \cz_P(\lambda):=U_{\chi}(\frak g)\otimes_{U_{0}(\frak
p_J)}\widehat L_{\frak p_J}(\lambda), \lambda\in X(T).$$ Then
$\widehat \cz_P(\lambda)$ is a quotient module of $\widehat
Z_\chi(\lambda)$. Let $\varphi$: $\widehat
Z_\chi(\lambda)\twoheadrightarrow \widehat \cz_P(\lambda)$ be the
canonical surjective morphism.

\subsection{}We apply the translation principle to $\widehat \cz_P(\lambda)$.

\begin{lem}\label{T1} Let $\lambda=(0,0,...,0)$ and $\mu\in C_0\cap X(T)$ be a regular weight.
Then we have $$T_\lambda^\mu\widehat \cz_P(\lambda)=\widehat
\cz_P(\mu)$$ where $T_{\lambda}^{\mu}$ is the so-called translation
functor defined in \cite[\S\,11]{Jan1}.
\end{lem}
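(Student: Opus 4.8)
The plan is to deduce Lemma \ref{T1} from the general machinery of translation functors in the graded module category $\mathcal C$, as developed in \cite[\S\S\,9--11]{Jan1}, by showing that the translation functor commutes with the induction from $\ppp_J$ in the relevant situation. First I would recall that $\widehat\cz_P(\lambda) = U_\chi(\ggg)\otimes_{U_0(\ppp_J)}\widehat L_{\ppp_J}(\lambda)$ and that, for $\lambda=(0,\dots,0)$ (i.e. $\lambda$ on the lowest wall of the closure of $C_0$) and $\mu\in C_0\cap X(T)$ regular, the weights $\lambda$ and $\mu$ lie in the closures of the same facet data so that the translation functor $T_\lambda^\mu$ is the projection onto the block of $\mu$ tensored with the appropriate Weyl module $V(\nu)$, where $\nu$ is the dominant weight in $W(\mu-\lambda)$. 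The key point is that tensoring by the $G$-module (equivalently, $U_\chi(\ggg)$-module) $V(\nu)$ commutes with induction: by the tensor identity,
\[
\bigl(U_\chi(\ggg)\otimes_{U_0(\ppp_J)}\widehat L_{\ppp_J}(\lambda)\bigr)\otimes V(\nu)
\;\cong\; U_\chi(\ggg)\otimes_{U_0(\ppp_J)}\bigl(\widehat L_{\ppp_J}(\lambda)\otimes V(\nu)|_{\ppp_J}\bigr),
\]
and this isomorphism is compatible with the $X(T)/\mathbb ZI$-grading. So the computation is pushed down to the Levi/parabolic level.

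Next I would analyze $\widehat L_{\ppp_J}(\lambda)\otimes V(\nu)|_{\ppp_J}$ as a $U_0(\ppp_J)$-module (equivalently $U_0(\ggg_J)$-module with trivial $\uuu_J^+$-action), and apply the block projection $\pr_\mu$. Since $\lambda$ is in the closure of $C_0$ and $\mu$ is regular in $C_0$, on the $\ggg_J$-side the corresponding translation $T_\lambda^\mu$ (now a translation functor for $U_0(\ggg_J)=U_\chi(\ggg_J)$) sends the graded simple $U_0(\ggg_J)$-module $\widehat L_{\ppp_J}(\lambda)$ to $\widehat L_{\ppp_J}(\mu)$; here I would use the general fact \cite[\S\,11]{Jan1} that translation out of a wall to a regular point preserves (graded) simplicity in this setting, together with a weight/character count showing the result is nonzero. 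Combining, $T_\lambda^\mu\widehat\cz_P(\lambda) \cong U_\chi(\ggg)\otimes_{U_0(\ppp_J)}\widehat L_{\ppp_J}(\mu) = \widehat\cz_P(\mu)$, which is the claim.

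The main obstacle I anticipate is bookkeeping at the level of the $X(T)/\mathbb ZI$-graded category: one must check that the translation functor on $\mathcal C$, the block decomposition, and the induction functor are all defined compatibly with the grading, and that the tensor identity holds in the graded sense — this is essentially routine but requires care since $V(\nu)$ must be regarded with its $X(T)$-grading and then reduced mod $\mathbb ZI$. A secondary subtlety is verifying that the specific choice $\lambda = (0,\dots,0)$ really lies in $\overline{C_0}$ in the relevant sense and that $\mu-\lambda$ (more precisely, the dominant $W$-conjugate of it) is a weight of a $G$-module with the right multiplicity-one property at the wall, so that $T_\lambda^\mu$ behaves as the idealized "translation out of a wall." I expect that for the groups under the hypotheses (H1)--(H3) these are standard, so the proof reduces to assembling the tensor identity with Jantzen's translation-principle results.
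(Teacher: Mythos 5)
Your overall strategy coincides with the paper's: write $T_\lambda^\mu\widehat \cz_P(\lambda)=\pr_\mu\bigl(L(\nu)\otimes\widehat \cz_P(\lambda)\bigr)$, move the tensor factor inside the induction by the (generalized) tensor identity, and then work at the level of $\ppp_J$. However, two of your supporting claims are not correct as stated. First, $\lambda=(0,\dots,0)$ is not a wall weight: since $\mu\in C_0\cap X(T)$ is assumed regular one has $p\geq h$, hence $0<\langle\rho,\alpha^{\vee}\rangle\leq h-1<p$ for all $\alpha\in R^+$, so $\lambda=0$ is itself a regular weight of $C_0$. Consequently ``translation out of a wall preserves simplicity'' is not the relevant principle; what is actually needed (and what the paper invokes via \cite[Lemma 7.7, Proposition 7.11]{Jan3}) is that $\mu$ is the \emph{unique} weight $\xi$ of $L(\nu)$ with $\xi+\lambda\in W_p.\mu$. (Note also that the translation functor of \cite[\S\,11]{Jan1} is built from the simple module $L(\nu)$, not the Weyl module $V(\nu)$.)

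Second, and more substantially, you replace the $\ggg$-level block projection $\pr_\mu$ by ``the corresponding translation functor for $U_0(\ggg_J)$'' applied to $\widehat L_{\ppp_J}(\lambda)$. No such commutation of $\pr_\mu$ past the induction functor is available, and $W_p$-linkage for $\ggg$ is not the same as linkage for $\ggg_J$, so this step is a genuine gap. The paper circumvents it concretely: take a $U_0(\ppp_J)$-composition series of $L(\nu)\otimes k_\lambda$ with factors $\widehat L_{\ppp_J}(\lambda_j)$, use exactness of the functor $\pr_\mu\bigl(U_\chi(\ggg)\otimes_{U_0(\ppp_J)}(-)\bigr)$ to discard every factor with $\lambda_j\notin W_p.\mu$, and keep the unique surviving factor $\widehat L_{\ppp_J}(\mu)$; its occurrence is guaranteed not by a character count but by $T_\lambda^\mu\widehat \cz_P(\lambda)\neq 0$, which follows from the surjection $\widehat \cz_P(\lambda)\twoheadrightarrow\widehat L_\chi(\lambda)$ and $T_\lambda^\mu\widehat L_\chi(\lambda)\cong\widehat L_\chi(\mu)$ \cite[Proposition 11.21]{Jan1}. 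Finally, you still owe the identification $\pr_\mu\bigl(\widehat \cz_P(\mu)\bigr)\cong\widehat \cz_P(\mu)$, which is not automatic; the paper settles it by observing that $\widehat \cz_P(\mu)$ is a quotient of $\widehat Z_\chi(\mu)$ with simple head, hence indecomposable and contained in a single block, namely that of $\mu$. With these repairs your outline becomes precisely the paper's proof.
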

\begin{proof}
By the definition of the translation factor, we have
$$T_\lambda^\mu\widehat \cz_P(\lambda)=\pr_\mu(L(\nu)\otimes \widehat
\cz_P(\lambda))$$ where $L(\nu)$ is the simple $G$-module with
highest weight $\nu$ in $W(\mu-\lambda)$ (cf.
\cite[\S\,11.20]{Jan1}).

Since $\widehat L_\chi(\lambda)$ is the head of $\widehat
\cz_P(\lambda)$ and $T_\lambda^\mu\widehat
L_\chi(\lambda)\cong\widehat L_\chi(\mu)$ (cf. \cite[Proposition
11.21]{Jan1}), we have $T_\lambda^\mu\widehat \cz_P(\lambda)\neq 0.$

Let $\Xi$ be the set of weights in $L(\nu)$. By \cite[Lemma 7.7,
Proposition 7.11]{Jan3}, there exists a unique weight $\xi\in \Xi$
with $\xi+\lambda\in W_p.\mu$ and $\xi+\lambda=\mu$. Note that
$\lambda$ is trivial, then $\xi=\mu$. Hence, $\mu$ is the unique
weight of $L(\nu)\otimes k_\lambda$ lying in $W_p.\mu$, where
$k_\lambda$ is the one-dimensional trivial $U_0(\ggg_J)$-module.

The generalized tensor identity (cf. \cite[\S1.12]{Jan2}) yields the
following isomorphism
\begin{equation}\label{F1}
T_\lambda^\mu\widehat \cz_P(\lambda):=\pr_\mu(L(\nu)\otimes \widehat
\cz_P(\lambda))\cong \pr_\mu\big(U_{\chi}(\frak
g)\otimes_{U_{0}(\frak p_J)}(L(\nu)\otimes \widehat L_{\frak
p_J}(\lambda))\big).
\end{equation}

Since $\lambda=(0,0,...,0)$, the simple module $\widehat L_{\frak p_J}(\lambda)$ is
trivial, i.e., $\widehat L_{\frak p_J}(\lambda)=k_\lambda$. Then
$L(\nu)\otimes \widehat L_{\frak p_J}(\lambda)=L(\nu)\otimes
k_\lambda.$ The set of weights in $L(\nu)\otimes k_{\lambda}$ is just
$\Xi$.

We have the following composition series of $L(\nu)\otimes
k_\lambda$
$$0=M_0\subset M_1\subset M_2\subset \cdot\cdot\cdot\subset
M_r=L(\nu)\otimes k_\lambda$$ where the factors $M_j/M_{j-1}\cong
\widehat L_{\frak p_J}(\lambda_j)$ with $\lambda_j\in \Xi$. Since
$T_\lambda^\mu\widehat \cz_P(\lambda)\neq 0$ and $\mu$ is the unique
weight of $L(\nu)\otimes \lambda$ which lies in $W_p.\mu$, there
exists a unique $l\leq r$ with $\lambda_l=\mu$. Then $\widehat
L_{\frak p_J}(\mu)\cong M_l/M_{l-1}$ is the unique composition
factor of $L(\nu)\otimes \lambda$ whose highest weight lies in
$W_p.\mu$.

Since the short sequence $0\rightarrow M_l\rightarrow
M_r\rightarrow M_r/M_l\rightarrow0$ is exact and the functor
$\pr_\mu\big( U_{\chi}(\frak g)\otimes_{U_{0}(\frak p_J)}(-)\big)$ is
exact, we have the following isomorphisms
\begin{align}\label{F2}
&\pr_\mu\big(U_{\chi}(\frak g)\otimes_{U_{0}(\frak p_J)}(L(\nu)\otimes
\widehat L_{\frak p_J}(\lambda))\big)\\
=&\pr_\mu\big(U_{\chi}(\frak
g)\otimes_{U_{0}(\frak p_J)}M_r\big)\cr \cong&
\pr_\mu\big(U_{\chi}(\frak g)\otimes_{U_{0}(\frak p_J)}M_l\big)\cr
\cong & \pr_\mu\big(U_{\chi}(\frak g)\otimes_{U_{0}(\frak
p_J)}\widehat L_{\frak p_J}(\mu)\big)\cr\cong&\pr_\mu\big(\widehat \cz_P(\mu)\big).
\end{align}

As $\widehat \cz_P(\mu)$ is the quotient of the baby Verma module
$\widehat Z_\chi(\mu)$, then $\widehat \cz_P(\mu)$ has a simple head
and is indecomposable. Furthermore, we have
\begin{equation}\label{F3}
\pr_\mu\big(\widehat \cz_P(\mu)\big)\cong
\widehat \cz_P(\mu).
\end{equation}
It follows from (\ref{F1}), (\ref{F2}) and (\ref{F3}) that $T_\lambda^\mu\widehat \cz_P(\lambda)=\widehat \cz_P(\mu).$
The proof is completed.
\end{proof}

Let $\widehat M\in \mathcal C$. A weight vector $m\in \widehat M$ is
called a {\sl maximal weight vector} if $x_{\alpha}.m=0$ for any
$\alpha\in R^+$.

\subsection{}We need the following lemma for later use.
\begin{lem}\label{lem1}Assume that $\chi(\frak b^+)=0$. Then every submodule of
$\widehat \cz_P(\lambda)$ contains a {\sl maximal weight vector}.
\end{lem}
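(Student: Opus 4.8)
The plan is to exploit the fact that $\widehat\cz_P(\lambda)$ is a graded module in the category $\mathcal C$ with a weight-space decomposition, and that it is generated (as a $U_\chi(\ggg)$-module) by the image of the $\frak p_J$-submodule $\widehat L_{\frak p_J}(\lambda)$. First I would fix a nonzero graded submodule $\widehat N\subseteq\widehat\cz_P(\lambda)$. Since $\widehat N$ is itself a finite-dimensional graded module, it decomposes into weight spaces for $\hhh$ (equivalently for the $X(T)/\mathbb ZI$-grading refined by $\frak h$-action), and among the weights occurring in $\widehat N$ we may choose one, say $\mu$, that is maximal in the sense that $\mu+\alpha$ is not a weight of $\widehat N$ for any $\alpha\in R^+$. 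Such a maximal weight exists because the set of weights is finite and partially ordered by the root order. Any nonzero vector $m$ in the $\mu$-weight space of $\widehat N$ then satisfies $x_\alpha.m=0$ for all $\alpha\in R^+$, since $x_\alpha.m$ lies in the $(\mu+\alpha)$-weight space of $\widehat N$, which is zero. This is exactly the definition of a maximal weight vector, and $m\in\widehat N$, so we are done.

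The one subtlety is that Lemma \ref{lem1} as stated refers to ``every submodule,'' and a priori one might worry about submodules in the full $U_\chi(\ggg)$-module category rather than graded submodules in $\mathcal C$. Here I would invoke the structure recalled in \S\,2.4: because $\chi$ has standard Levi form with $\chi(\frak b^+)=0$, the module $\widehat\cz_P(\lambda)$ carries its $X(T)/\mathbb ZI$-grading, and moreover each $\hhh$-weight space is a genuine eigenspace (the $h^p-h^{[p]}$ act by scalars determined by $\lambda$, so $\widehat\cz_P(\lambda)$ is a $U_0(\hhh)$-semisimple module). Hence any $U_\chi(\ggg)$-submodule is automatically $\hhh$-stable, being the sum of its intersections with the weight spaces, so the distinction is immaterial and the weight-space argument above applies to an arbitrary submodule.

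The main obstacle, such as it is, is purely bookkeeping: one must be careful that the partial order on weights used to extract a maximal one is the right one, namely that $x_\alpha$ (for $\alpha\in R^+$, a positive root vector in the Chevalley basis) raises the $\hhh$-weight by $\alpha$. With the conventions fixed in \S\,2.1 — triangular decomposition $\ggg=\nnn^+\oplus\hhh\oplus\nnn^-$ and $\bbb^+=\hhh\oplus\nnn^+$ — this is immediate: $[h,x_\alpha]=\alpha(h)x_\alpha$, so $x_\alpha.m$ has weight $\mu+\alpha$. Since $\widehat N$ is finite-dimensional its weight set is finite, every chain in the root order terminates, and a maximal element exists; picking any nonzero vector there finishes the proof. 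There is no real analytic or combinatorial difficulty here — the lemma is a standard ``highest weight vector exists'' argument adapted to the graded baby Verma setting, and the hypothesis $\chi(\frak b^+)=0$ is used only to guarantee that $\nnn^+$ acts locally nilpotently (indeed the $x_\alpha$ satisfy $x_\alpha^p=0$ in $U_\chi(\ggg)$ when $\chi(x_\alpha)=0$), which is what legitimizes passing to a maximal weight.
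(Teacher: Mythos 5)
Your reduction to $\hhh$-stability is fine (since $\chi(\hhh)=0$, the algebra $U_0(\hhh)$ is commutative semisimple, so any submodule is the sum of its intersections with the weight spaces), but the key step --- ``among the weights occurring in $\widehat N$ we may choose one that is maximal \ldots because the set of weights is finite and partially ordered by the root order'' --- has a genuine gap. The $\hhh$-weights of $\widehat \cz_P(\lambda)$ live in $\Lambda\cong X(T)/pX(T)$, and the grading from the category $\mathcal C$ is only by $X(T)/\mathbb Z I$; there is no lift to an honest $X(T)$-grading, because $y_\alpha^p=\chi(y_\alpha)^p\neq 0$ in $U_\chi(\ggg)$ for $\alpha\in I$. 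Consequently the relation ``$\mu\prec\mu+\alpha$, $\alpha\in R^+$'' is \emph{not} a partial order on the relevant weight set: adding $\alpha$ a total of $p$ times returns to the same weight and (for $\alpha\in\mathbb Z I$) does not change the graded degree either, so cycles occur and a maximal element need not exist. Concretely, with $\lambda=0$ and $\alpha_1\in I$, the basis vector $y_{\alpha_1}^{p-1}\otimes v_\lambda$ has the same $X(T)/\mathbb Z I$-degree as $1\otimes v_\lambda$ and $\hhh$-weight $\lambda+d\alpha_1$, so even $\lambda$ itself is not maximal in your sense; for $\ggg=\mathfrak{sl}_2$ with $\chi$ regular nilpotent, the module $\widehat Z_\chi(\lambda)$ (a submodule of itself, to which the lemma applies) has all $p$ residues $\lambda-2a$ as weights, so \emph{no} weight is maximal in your sense, although a maximal weight vector of course exists. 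Finiteness does not rescue the argument when the ``order'' has cycles.

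The correct mechanism is the one you mention only in passing at the end: the nilpotency of the $\nnn^+$-action must be used to produce a joint kernel vector, not to order weights. This is the paper's proof: since $\chi(\bbb^+)=0$, one has $U_\chi(\bbb^+)=U_0(\bbb^+)$, so a nonzero submodule $\widehat N$ is a nonzero finite-dimensional $U_0(\bbb^+)$-module and contains a simple $U_0(\bbb^+)$-submodule; every simple $U_0(\bbb^+)$-module is one-dimensional with trivial $\nnn^+$-action (the augmentation ideal of $U_0(\nnn^+)$ is nilpotent because $x_\alpha^p=0$ for all $\alpha\in R^+$, and $U_0(\hhh)$ is commutative). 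Equivalently, the invariant space $\widehat N^{\nnn^+}=\{m\in\widehat N\mid x_\alpha.m=0,\ \forall\,\alpha\in R^+\}$ is nonzero and $\hhh$-stable, hence contains a weight vector, which is the required maximal weight vector. Your weight-maximality route should be replaced by this Engel-type argument.
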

\begin{proof}Let $\widehat M$ be a submodule of $\widehat \cz_P(\lambda)$.
Since $U_0(\frak b^+)$ is a subalgebra of $U_\chi(\frak g)$, $\widehat
M$ is also a $U_0(\frak b^+)$-module. Then there exists an irreducible
$U_0(\frak b^+)$-submodule in $\widehat M$ which is one-dimensional
annihilated by $\nnn^+$. This completes the proof.
\end{proof}

\section{Proof of Theorem \ref{main thm} for type $A_n$}

\subsection{}In this section, we always assume that $\frak g$ is a simple Lie
algebra of type  $A_n$ and $\chi\in\ggg^*$ has standard Levi form, and $I=\{\alpha\in\Pi\mid
\chi(\ggg_{-\alpha})\neq 0\}$ in the Dynkin diagram is described as case (i) in Theorem \ref{main thm}.

We assume that $I=\{\alpha_1,\alpha_2,...,\alpha_s\}$ with $s\leq n$.
Then $J=\{\alpha_{s+1},...,\alpha_n\}$. If $s= n$, i.e., $\chi$ is
regular nilpotent, it's well known that $\widehat \cz_P(\lambda)= \widehat
Z_\chi(\lambda)$ is irreducible (cf. \cite[\S\,10]{Jan1}). When
$s<n$, we have
\begin{claim}\label{a claim} The parabolic baby Verma module $\widehat \cz_P(\lambda)$ has only one
maximal weight vector (up to scalars) that generate $\widehat \cz_P(\lambda)$.
\end{claim}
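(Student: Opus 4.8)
The plan is to exploit the fact that $\widehat{\cz}_P(\lambda)$ is a quotient of the baby Verma module $\widehat{Z}_\chi(\lambda)$ via the surjection $\varphi$ of Section \ref{S2.5}, together with the explicit PBW-type basis for $\widehat{Z}_\chi(\lambda)$ coming from the root vectors $y_\alpha$, $\alpha\in R^+$. A maximal weight vector of $\widehat{\cz}_P(\lambda)$ is, by definition, a weight vector killed by every $x_\alpha$, $\alpha\in R^+$; in particular it is killed by $x_\beta$ for $\beta\in J$, so it spans a trivial $U_0(\ggg_J)$-submodule sitting inside $\widehat{\cz}_P(\lambda)$ regarded as a $\ggg_J$-module. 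First I would decompose $\widehat{\cz}_P(\lambda)$ as a $U_0(\ggg_J)$-module: since $\ggg = \uuu_J^- \oplus \ggg_J \oplus \uuu_J^+$ and $\widehat{\cz}_P(\lambda) = U_\chi(\ggg)\otimes_{U_0(\ppp_J)}\widehat{L}_{\ppp_J}(\lambda)$, the PBW theorem gives $\widehat{\cz}_P(\lambda) \cong U_\chi(\uuu_J^-)\otimes_k \widehat{L}_{\ppp_J}(\lambda)$ as $k$-spaces, and this is a filtered $U_0(\ggg_J)$-module whose associated graded pieces are (twisted) copies of $\widehat{L}_{\ppp_J}(\lambda)$ indexed by the monomials in the $y_\alpha$ with $\alpha\in R^+\setminus \bbz J$ (i.e. $\alpha$ a root of $\uuu_J^-$). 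Because $\chi$ vanishes on $\uuu_J^-$ in the type-$A_n$, case-(i) configuration — $I$ being an initial segment of the Dynkin diagram and $J$ the complementary tail, so that no root of $\uuu_J^-$ lies in the support of $\chi$ — these $y_\alpha^p$ act as $0$, and one gets a genuine $\ggg_J$-module filtration.

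The key step is then to identify exactly which graded pieces can contribute a $\uuu_J^+$-invariant, $\nnn_J^+$-invariant vector, i.e. a genuine maximal weight vector for all of $\ggg$. I would count the $\nnn_J^+$-highest weight vectors: each graded layer is $\cong \widehat{L}_{\ggg_J}(\lambda)\otimes (\text{one-dimensional weight space})$, and the irreducible $\widehat{L}_{\ggg_J}(\lambda)$ contributes a single $\nnn_J^+$-highest weight line (up to scalars). So maximal weight vectors of $\widehat{\cz}_P(\lambda)$ correspond, after passing to the top of the filtration that they survive in, to those layers — equivalently those monomials $\prod y_\alpha^{m_\alpha}$ in the $\uuu_J^-$-directions — for which the resulting highest weight $\lambda - \sum m_\alpha\alpha$ is actually realized by a vector annihilated by $\nnn_J^+$ and by the $x_\alpha$, $\alpha\in I$, inside $\widehat{\cz}_P(\lambda)$ itself, not merely in the associated graded. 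The constraint $\lambda_0\in C_0$ and $p$-regularity of $\lambda$ are what force this set to be a single element: the linkage principle together with the fact that $\widehat{\cz}_P(\lambda)$ has simple head $\widehat{L}_\chi(\lambda)$ (being a quotient of $\widehat{Z}_\chi(\lambda)$, hence indecomposable with simple top) should pin down that the only weight linked to $\lambda$ appearing as a potential highest weight of a submodule is $\lambda$ itself, and the corresponding maximal vector is the canonical generator $\varphi(1\otimes v_\lambda)$.

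Concretely, I would argue: suppose $m\in\widehat{\cz}_P(\lambda)$ is a maximal weight vector of weight $\nu$. Then $U_\chi(\ggg)m$ is a nonzero submodule, so by Lemma \ref{lem1} it has a maximal weight vector, and since $m$ generates it, its simple head is a quotient of $\widehat{Z}_\chi(\nu)$; hence $\widehat{L}_\chi(\nu)$ occurs as a composition factor of $\widehat{\cz}_P(\lambda)$, forcing $\nu\uparrow\lambda$ in the strong linkage order (and $\nu\in W_p\cdot\lambda$ up to the $\bbz I$-grading shift). On the other hand, the explicit $U_0(\uuu_J^-)$-basis shows every weight of $\widehat{\cz}_P(\lambda)$ is of the form $\lambda - (\text{sum of roots in }R^+)$, so $\nu\preceq\lambda$; combined with $\nu\in W_p\cdot\lambda$ and $\lambda_0\in C_0$ with $\lambda$ $p$-regular, the only possibility in the closed alcove is $\nu=\lambda$. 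Finally, in the weight space $\widehat{\cz}_P(\lambda)_\lambda$ — which, from the basis, is exactly $1\otimes \widehat{L}_{\ppp_J}(\lambda)_\lambda$, one-dimensional — the generator $\varphi(1\otimes v_\lambda)$ is visibly the unique (up to scalar) maximal weight vector, and it generates $\widehat{\cz}_P(\lambda)$. The main obstacle is the passage from the associated-graded computation to the actual module: showing that no genuinely new maximal weight vector of weight $\lambda - \sum m_\alpha\alpha$ with some $m_\alpha>0$ survives requires the linkage/$p$-regularity input to rule out all the a priori candidate weights, and carrying this out cleanly for the specific configuration (i) is where the type-$A_n$ combinatorics of $R^+\setminus\bbz J$ must be used.
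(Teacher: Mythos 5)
There is a genuine gap, and it sits exactly where you flag the ``main obstacle'': the step asserting that a maximal weight vector of weight $\nu$ must satisfy $\nu\in W_p.\lambda$ and $\nu\preceq\lambda$, and that $p$-regularity of $\lambda$ with $\lambda_0\in C_0$ then forces $\nu=\lambda$. That deduction is false. The orbit $W_p.\lambda$ contains many weights strictly below $\lambda$ of the form $\lambda-\sum k_i\alpha_i$ with $0\leq k_i<p$, e.g.\ $s_{\beta}.\lambda=\lambda-\langle\lambda+\rho,\beta^{\vee}\rangle\beta$ for $\beta\in R^+$, whose coefficient lies strictly between $0$ and $p$ precisely because $\lambda_0$ is regular in $C_0$; such weights do occur as weights of $\widehat{\cz}_P(\lambda)$, so linkage plus the partial order excludes nothing. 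Indeed, your argument uses only that the module is a quotient of $\widehat Z_\chi(\lambda)$ with simple head and that its weights lie below $\lambda$; applied verbatim to $\widehat Z_\chi(\lambda)$ itself it would prove that every baby Verma module with $p$-regular $\lambda_0\in C_0$ is irreducible, which is false for standard Levi form $\chi$ with $I\neq\Pi$ (already for $\chi=0$, where $y_{\beta}^{\langle\lambda+\rho,\beta^{\vee}\rangle}\otimes v_\lambda$ for a simple root $\beta$ is a maximal weight vector generating a proper submodule). So the whole content of the claim is the part your outline defers, and no amount of linkage/$p$-regularity bookkeeping on weights alone can supply it; one must actually rule out maximal vectors at the lower linked weights inside the specific parabolic quotient.

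For comparison, the paper's proof does this by brute force after two reductions you do not make. First, the translation principle (Lemma \ref{T1}) reduces everything to $\lambda=(0,0,\dots,0)$, so that $\widehat{\cz}_P(\lambda)\cong U_\chi(\uuu_J^-)$ as vector spaces and a putative maximal vector $u\otimes v_\lambda$ generating a proper submodule can be expanded in an explicit ordered PBW basis of $U_\chi(\uuu_J^-)$. Second, it invokes Jantzen's graded isomorphism $\widehat Z_\chi(\lambda)^{\lambda+\bbz I}\cong_{\ggg_I}\widehat L_\chi(\lambda)^{\lambda+\bbz I}$, which shows that such a vector cannot lie in the $\lambda+\bbz I$ graded component. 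The real work (Lemmas \ref{lem2} and \ref{lem3}, plus the induction along the connected set $I$) consists of Chevalley-commutator computations showing that maximality forces $u$ to contain a monomial summand built only from the $y_{\alpha_i}$ with $\alpha_i\in I$, hence forces $u\otimes v_\lambda$ into that forbidden graded component --- a contradiction. This monomial analysis and the graded isomorphism are precisely the ingredients missing from your proposal; your filtration-by-$\ggg_J$-layers picture is a reasonable starting frame, but it does not by itself see which associated-graded highest weight vectors survive in the actual module.
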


\begin{rem}\label{remark for type A}
It follows from Claim \ref{a claim} and Lemma \ref{lem1} that
$\widehat \cz_P(\lambda)$ is irreducible.
\end{rem}

In the following subsections, we first prove Claim \ref{a claim}.

\subsection{} \label{TrivalLJ}
Thanks to Lemma \ref{T1}, it suffices to prove Claim \ref{a claim} for the special case that $\lambda=(0,0,...,0)$. In this case, the irreducible $\widehat {L}_{\ppp_J}(\lambda)$-module is one-dimensional. This means that this module is $\ppp_J$-trivial. Then $\widehat \cz_P(\lambda)=U_\chi(\uuu_J^-)$ as vector spaces, where $\uuu_J^-$ is the negative counterpart of $\uuu_J^+$ such that $\ggg=\uuu_J^-\oplus \ppp_J$.

\subsection{} In the remainder of  this section, we always take $\lambda=(0,0,\cdots,0)$.
Assume that $w=u\otimes v_\lambda$ is a maximal weight vector of $\widehat \cz_P(\lambda)$ of weight $\mu$. We aim at proving that $w$ generates the whole $\widehat \cz_P(\lambda)$.

Suppose the submodule generated by $w$ is proper, then $\mu\in W_p.\lambda$, and $\mu=\lambda-\sum\limits_{i=1}^{n} k_i\alpha_i,~k_i\in\mathbb{Z}_+$
(cf. \cite[Proposition 4.5]{Jan2}). Fix an order of the Chevalley basis
in $\uuu_J^-$ as follows
\begin{align*}
&y_{\alpha_1}; \\
&y_{\alpha_1+\alpha_2}, y_{\alpha_2};\\
&y_{\alpha_1+\alpha_2+\alpha_3},y_{\alpha_2+\alpha_3},y_{\alpha_3};\\
&\cdots\\
&y_{\alpha_1+\alpha_2+\cdots+\alpha_s},\cdots,  y_{\alpha_{s-1}+\alpha_s},y_{\alpha_s};\\
&y_{\alpha_1+\alpha_2+\cdots+\alpha_{s+1}},\cdots,y_{\alpha_s+\alpha_{s+1}};\\
&\cdots\\
&y_{\alpha_1+\alpha_{2}+\cdots+\alpha_n},\cdots, y_{\alpha_s+\cdots+\alpha_{n}}.
\end{align*}

Then $U_\chi(\uuu_J^-)$ has the following basis
\begin{align}\label{Uminus}
{\underline y}_1^{\bf a_1}\underline
y_2^{\bf a_2} \cdots \underline y_s^{\bf a_s} \underline y_{s+1}^{\bf a_{s+1}}\cdots \underline y_n^{\bf a_n},
\end{align}
where $\bf a=(a_1,a_2,\cdots,a_n)$,
$$\underline y_j^{\bf
a_j}=y_{\alpha_{1}+\alpha_2+...+\alpha_j}^{a_j(1)} y_{\alpha_2+...+\alpha_j}^{a_j(2)}\cdots y_{\alpha_{j-1}+\alpha_j}^{a_j(j-1)}y_{\alpha_j}^{a_j(j)}\cdot\cdot\cdot
\mbox{ for }
j=1,2,\cdots,s$$
and
$$\underline y_j^{\bf
a_j}=y_{\alpha_{1}+\cdots+\alpha_j}^{a_j(1)}y_{\alpha_{2}+\cdots+\alpha_j}^{a_j(2)}\cdot\cdot\cdot
y_{\alpha_{s}+\alpha_{s+1}+\cdots+\alpha_j}^{a_j(s)} \mbox{ for }
j=s+1,\cdots,n$$
with $0\leq a_j(k)\leq p-1,\,\forall\,j, k$. Hence $u\otimes v_\lambda$ can be uniquely written as follows
\begin{align}\label{eq3.1}
u\otimes v_\lambda=\sum\limits_{\bf a}l_{\bf a}{\underline y}_1^{\bf a_1}\underline
y_2^{\bf a_2} \cdots \underline y_s^{\bf a_s} \underline y_{s+1}^{\bf a_{s+1}}\cdots \underline y_n^{\bf a_n}\otimes v_\lambda.
\end{align}

For $\widehat M\in\mathcal C$, we have a decomposition $\widehat
M=\bigoplus\limits_{\nu\in X(T)/\mathbb ZI}\widehat M^{\nu}$ with
$x_\alpha.\widehat M^{\nu}\subset \widehat M^{\nu+\alpha}$ for all
$\alpha\in R$.  By \cite[\S\,11.7]{Jan1}, we have $\widehat
Z_\chi(\lambda)^{\lambda+\mathbb
ZI}\cong_{\ggg_I} \widehat
L_\chi(\lambda)^{\lambda+\mathbb ZI}$. Hence,
$\widehat \cz_P(\lambda)^{\lambda+\mathbb ZI}\cong_{\ggg_I}\widehat
L_\chi(\lambda)^{\lambda+\mathbb ZI}.$
Since $u\otimes v_\lambda$
generates a proper submodule of $\widehat \cz_P(\lambda)$, it follows that
 \begin{align}\label{Aassumption}
u\otimes v_\lambda\notin\widehat
\cz_P(\lambda)^{\lambda+\mathbb ZI} \mbox{ and }
\mu\in
W_p.\lambda\setminus W_{I,p}.\lambda.
\end{align}


\subsection{} It follows from (\ref{Uminus}) and (\ref{eq3.1}) that $k_s\geq k_{s+1}$ for a weight vector $u\otimes v_\lambda\in \widehat \cz_P(\lambda)$ of weight $\mu=\lambda-\sum\limits_{i=1}^{n} k_i\alpha_i,~k_i\in\mathbb{Z}_+$. Furthermore, we have
\begin{lem}\label{lem2} Assume that $u\otimes v_\lambda$ is a {\sl maximal weight vector} of $\widehat \cz_P(\lambda)$ with weight
$\mu=\lambda-\sum\limits_{i=1}^{n} k_i\alpha_i,~k_i\in\mathbb{Z}_+$,
and that $u\otimes v_\lambda$ generates a proper submodule of $\widehat \cz_P(\lambda)$. Then $k_s>k_{s+1}$.
\end{lem}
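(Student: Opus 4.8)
The strategy is to argue by contradiction: assume $u\otimes v_\lambda$ is a maximal weight vector generating a proper submodule, with $k_s = k_{s+1}$, and derive that in fact $u\otimes v_\lambda$ lies in $\widehat{\cz}_P(\lambda)^{\lambda+\mathbb{Z}I}$, contradicting \eqref{Aassumption}. The key structural observation is that the root $\beta := \alpha_s + \alpha_{s+1}$ sits at the ``boundary'' between $I$ and $J$: the vector $y_{\alpha_s+\alpha_{s+1}+\cdots+\alpha_j}$ appearing in the $\underline{y}_j^{\mathbf{a}_j}$ blocks for $j\ge s+1$ carry the only $\alpha_{s+1}$-weight contributions that are not already $\mathbb{Z}I$-internal. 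So I would first extract, from the explicit basis \eqref{Uminus}–\eqref{eq3.1} and the condition $k_s = k_{s+1}$, that every basis monomial $\underline{y}_1^{\mathbf{a}_1}\cdots\underline{y}_n^{\mathbf{a}_n}$ occurring in $u$ with nonzero coefficient $l_{\mathbf a}$ must have its total $\alpha_{s+1}$-degree concentrated in the factor $y_{\alpha_s+\alpha_{s+1}}$ from the $\underline{y}_{s+1}$-block — i.e. $a_j(k) = 0$ for all $j \ge s+2$ and all $k$, and the $\alpha_{s+1}$-degree in block $s+1$ equals $k_{s+1} = k_s$.

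Second, I would exploit maximality. Apply $x_{\alpha_{s+1}}$: since $x_{\alpha_{s+1}}.(u\otimes v_\lambda) = 0$ and $\chi(\ggg_{-\alpha_{s+1}}) = 0$ (because $\alpha_{s+1}\in J$, hence $\alpha_{s+1}\notin I$), the raising operator acts ``classically'' on the $\uuu_J^-$-part. Commuting $x_{\alpha_{s+1}}$ past the blocks $\underline{y}_1^{\mathbf{a}_1}\cdots\underline{y}_s^{\mathbf{a}_s}$ produces nothing new (those involve only roots supported on $\alpha_1,\dots,\alpha_s$), so the vanishing forces a relation purely among the coefficients $l_{\mathbf a}$ coming from the $sl_2$-triple attached to $\alpha_{s+1}$. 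Combined with the degree constraint from Step 1, this should force each such monomial to already be a product of root vectors for roots in $\mathbb{Z}I$ after absorbing $y_{\alpha_s+\alpha_{s+1}}$ into the preceding $I$-blocks via the commutation $[x_{\alpha_{s+1}}, y_{\alpha_s+\alpha_{s+1}}]$ being proportional to $y_{\alpha_s}$. More precisely, I expect to show that $u\otimes v_\lambda$ is, modulo the relations in $U_\chi(\uuu_J^-)$, a linear combination of monomials supported entirely on $R_I$, hence lies in $\widehat{\cz}_P(\lambda)^{\lambda+\mathbb{Z}I}$.

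Third, I would invoke the isomorphism $\widehat{\cz}_P(\lambda)^{\lambda+\mathbb{Z}I} \cong_{\ggg_I} \widehat{L}_\chi(\lambda)^{\lambda+\mathbb{Z}I}$ recorded just before \eqref{Aassumption}: a maximal weight vector lying in this graded piece generates, and in particular one has $\mu \in W_{I,p}.\lambda$, contradicting the second half of \eqref{Aassumption}. This closes the argument, so $k_s = k_{s+1}$ is impossible and $k_s > k_{s+1}$.

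\textbf{Main obstacle.}
The delicate point is Step 2: making the commutator bookkeeping precise. When $x_{\alpha_{s+1}}$ is pushed through a monomial, it does not only hit $y_{\alpha_s+\alpha_{s+1}}$ — it also lowers powers of the various $y_{\alpha_i+\cdots+\alpha_{s+1}+\cdots}$ (for $i\le s$) producing $y_{\alpha_i+\cdots+\alpha_s}$-type terms, and one must check these correction terms cannot conspire to cancel. Here the ordering chosen for the Chevalley basis and the resulting PBW-type normal form \eqref{Uminus} is essential: I would track the \emph{leading} monomial (in a suitable term order refining the given basis order, largest in the $y_{\alpha_s+\alpha_{s+1}}$-exponent and then lexicographically) and show its coefficient must vanish unless the monomial is already $\mathbb{Z}I$-supported. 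The reduction from $U(\uuu_J^-)$ to $U_\chi(\uuu_J^-) = U_0(\uuu_J^-)$ — i.e. the $p$-th power relations $y^p = 0$ — also needs care, since $k_{s+1}$ could a priori exceed $p-1$; but the constraint $0\le a_j(k)\le p-1$ together with $k_s = k_{s+1}$ keeps everything within the allowed range, which is exactly why the inequality is only strict, not something stronger.
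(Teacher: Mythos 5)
Your overall device (apply $x_{\alpha_{s+1}}$, use that it commutes past the $I$-blocks, and read off coefficient relations in the PBW basis \eqref{Uminus}) is the same as the paper's, but both of your structural claims that carry the contradiction are wrong. First, Step 1 does not follow: $k_s=k_{s+1}$ only forces the block-$s$ factor $\underline y_s^{\mathbf a_s}$ to be absent from every monomial of $u$ (because each root vector in blocks $j\geq s+1$ contributes exactly once to both $k_s$ and $k_{s+1}$, while block $s$ contributes only to $k_s$); it does \emph{not} force $a_j(k)=0$ for $j\geq s+2$, nor that the $\alpha_{s+1}$-degree sits only in $y_{\alpha_s+\alpha_{s+1}}$. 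For instance the single factor $y_{\alpha_1+\cdots+\alpha_n}$ gives a weight with $k_s=k_{s+1}=1$ and lives entirely in block $n$, and within block $s+1$ any $y_{\alpha_t+\cdots+\alpha_{s+1}}$, $t\leq s$, carries $\alpha_{s+1}$-degree. Second, and more seriously, your target contradiction is unattainable when $k_{s+1}\neq 0$: membership of a weight vector in the graded piece $\widehat\cz_P(\lambda)^{\lambda+\mathbb ZI}$ is decided purely by its weight class $\mu+\mathbb ZI$, and $\mu=\lambda-\sum_i k_i\alpha_i$ with $k_{s+1}>0$ is never in $\lambda+\mathbb ZI$. ``Absorbing'' $y_{\alpha_s+\alpha_{s+1}}$ via $[x_{\alpha_{s+1}},y_{\alpha_s+\alpha_{s+1}}]\propto y_{\alpha_s}$ produces a \emph{different} vector of a different weight, so it cannot show that $u\otimes v_\lambda$ itself is $\mathbb ZI$-supported; the contradiction with \eqref{Aassumption} is only available in the degenerate case $k_s=k_{s+1}=0$.

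What the computation with $x_{\alpha_{s+1}}$ actually yields (and this is the paper's route) is sharper: with block $s$ absent, $x_{\alpha_{s+1}}$ commutes with all blocks except block $s+1$, and each commutator term deposits a single block-$s$ root vector $y_{\alpha_t+\cdots+\alpha_s}$ while lowering position $t$ of block $s+1$; distinct $(\mathbf a,t)$ give distinct basis monomials, so maximality ($x_{\alpha_{s+1}}\cdot(u\otimes v_\lambda)=0$) together with freeness of $\widehat\cz_P(\lambda)$ over $U_\chi(\uuu_J^-)$ forces $l_{\mathbf a}\,a_{s+1}(t)=0$ for all $t$, i.e.\ kills every monomial whose block $s+1$ is nontrivial; repeating with the raising operators $x_{\alpha_{s+1}+\cdots+\alpha_j}$ for the remaining blocks $j\geq s+2$ kills the rest, so $u\otimes v_\lambda=0$, a contradiction with $k_{s+1}\neq 0$. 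So the contradiction is ``$u=0$'', not ``$u$ lies in the wrong graded piece''; as written, your proposal has a genuine gap at exactly this point, in addition to the false concentration claim of Step 1.
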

\begin{proof}
Suppose $k_{s}=k_{s+1}\neq0$. Then
the factor $\underline y_{s}^{\bf a_{s}}$ does not appear in any
monomial summand of (\ref{eq3.1}). The expression (\ref{eq3.1}) can be written as
\begin{align}\label{eq3.2}
u\otimes v_\lambda=&\sum\limits_{\bf a}l_{\bf a}{\underline
y}_1^{a_1(1)}\underline y_2^{(a_2(1), a_2(2))}
\cdot\cdot\cdot\underline
y_{s-1}^{(a_{s-1}(1),a_{s-1}(2),\cdots,a_{s-1}(s-1))}\\
&\cdot\underline
y_{s+1}^{(a_{s+1}(1),a_{s+1}(2),\cdots,a_{s+1}(s))}
\cdot\cdot\cdot\underline
y_n^{(a_n(1),a_n(2),\cdots,a_n(s))}\otimes
v_{\lambda}.\nonumber
\end{align}

Since $k_{s+1}\neq0$, there exists some factor $\underline y_{i}^{\bf
a_{i}}\neq 0$ with $s+1\leq i\leq n$.  Without loss of generality,
we may assume that $\underline y_{s+1}^{\bf a_{s+1}}\neq 0$, then
\begin{align*}
& x_{\alpha_{s+1}}\cdot\underline
y_{s+1}^{(a_{s+1}(1),a_{s+1}(2),\cdots,a_{s+1}(s))}\cr =&
\sum\limits_{t=1}^{s} a_{s+1}(t)
N_{\alpha_{s+1},-(\alpha_t+\cdots+\alpha_{s+1})}y_{\alpha_t+\cdots+\alpha_{s}}
\underline
y_{s+1}^{(a_{s+1}(1),a_{s+1}(2),\cdots,a_{s+1}(t)-1,\cdots,a_{s+1}(s))}
\\
&+\underline y_{s+1}^{(a_{s+1}(1),a_{s+1}(2),\cdots,a_{s+1}(s))} x_{\alpha_{s+1}}
\end{align*}
where $N_{\alpha_{s+1},-(\alpha_t+\cdots+\alpha_{s+1})}$ is a
structure constant of $\ggg$ relative to the Chevalley basis. Since
$x_{\alpha_{s+1}}$ commutes with $\underline y_t$ for any $t$ with $t\neq s$, and annihilates $v_\lambda$,
it follows that
\begin{align}\label{eq3.3} & x_{\alpha_{s+1}}\cdot
u\otimes v_{\lambda}\\ =&(\sum\limits_{\bf a}
\sum\limits_{t=1}^{s}l_{\bf
a}a_{s+1}(t)N_{\alpha_{s+1},-(\alpha_t+...+\alpha_{s+1})}{\underline
y}_1^{a_1(1)}\cdot\cdot\cdot\underline
y_{s-1}^{(a_{s-1}(1),a_{s-1}(2),...,a_{s-1}(s))}
y_{\alpha_t+...+\alpha_{s}}\nonumber\\ &\cdot\underline
y_{s+1}^{(a_{s+1}(1),a_{s+1}(2),...,a_{s+1}(t)-1,...,a_{s+1}(s))}
\cdot\cdot\cdot \underline y_n^{(a_n(1),a_n(2),...,a_n(t),\cdots,a_n(s))})\otimes v_\lambda
.\nonumber
\end{align}
Note that $\cz_P(\lambda)$ is free over $U_\chi(\uuu_J^-)$. Since $u\otimes v_\lambda$ is a maximal weight vector, it follows from (\ref{eq3.3}) and (\ref{Uminus}) that $u\otimes v_\lambda=0$, a contradiction. Hence, $k_s>k_{s+1}$.
\end{proof}

\begin{lem}\label{lem3} Maintain the notations as in Lemma \ref{lem2}. Assume that
$$\underline y_s^{\bf
m_s}=y_{\alpha_1+\alpha_2+\cdots+\alpha_s}^{m_s(1)}
y_{\alpha_2+\cdots+\alpha_s}^{m_s(2)}\cdot\cdot\cdot
y_{\alpha_{s-1}+\alpha_s}^{m_s(s-1)}y_{\alpha_{s}}^{m_s(s)}$$ is a
factor of one monomial summand of $u$ such that $m_s(s)$ is maximal
among all the powers of $y_{\alpha_{s}}$ in monomial summands of
$u$. Then neither $y_{\alpha+\alpha_s}$ nor $y_{\alpha_s+\beta}$ with
$\alpha\in R_I^+$, $\beta\in R_J^+$, appear in a
monomial summand of $u$ containing the factor $y_{\alpha_{s}}^{m_{s}(s)}$.
\end{lem}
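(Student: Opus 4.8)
The plan is to use that $u\otimes v_\lambda$ is a maximal weight vector, so that $x_\nu\cdot(u\otimes v_\lambda)=0$ for every $\nu\in R^+$, applied to a carefully chosen $\nu$. Observe first that the factors forbidden by the lemma are exactly the Chevalley vectors $y_\gamma$ from the basis (\ref{Uminus}) of $U_\chi(\uuu_J^-)$ with $\gamma\neq\alpha_s$ and $\gamma-\alpha_s\in R^+$: in type $A_n$ these are $\gamma=\alpha_i+\dots+\alpha_s$ (so $\gamma-\alpha_s=\alpha_i+\dots+\alpha_{s-1}\in R_I^+$) and $\gamma=\alpha_s+\dots+\alpha_j$ (so $\gamma-\alpha_s=\alpha_{s+1}+\dots+\alpha_j\in R_J^+$), i.e. the $y_{\alpha+\alpha_s}$ and $y_{\alpha_s+\beta}$ of the statement. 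Fix such a $\gamma$ and set $\zeta:=\gamma-\alpha_s$ (this is the $\alpha$ in the first case and the $\beta$ in the second). Then $\zeta\in R^+$, and either $\zeta\in R_I^+$ (so $x_\zeta\in\uuu_J^+$) or $\zeta\in R_J^+$ (so $x_\zeta\in\nnn_J^+\subset\ggg_J$); since $\widehat L_{\ppp_J}(\lambda)$ is the trivial $\ppp_J$-module (\S\ref{TrivalLJ}), in both cases $x_\zeta\cdot v_\lambda=0$, whence $x_\zeta\cdot(u\otimes v_\lambda)=0$. I would apply $x_\zeta$ and read off the component of largest $y_{\alpha_s}$-degree.

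The next step is the commutator bookkeeping in type $A_n$. For a basis factor $y_\delta$ of $\uuu_J^-$ the root system gives $[x_\zeta,y_\delta]=0$ unless $\delta-\zeta\in R^+$ — in which case $[x_\zeta,y_\delta]=N_\delta\,y_{\delta-\zeta}$ with $N_\delta=\pm1$ and $y_{\delta-\zeta}\in\uuu_J^-$ — or $\delta=\zeta$ (which can occur only when $\zeta\in R_I^+$), in which case $[x_\zeta,y_\delta]=h_\zeta\in\hhh$. Moreover every iterated bracket $[[x_\zeta,y_\delta],y_\delta]$ vanishes (no $A_n$-root has a simple-root coefficient $\le-2$), and $y_{\delta-\zeta}$ (resp. $h_\zeta$) commutes with each basis factor standing to its right in the fixed order, hence reaches $v_\lambda$ unobstructed; as $\lambda=(0,\dots,0)$ one has $h_\zeta\cdot v_\lambda=0$, and $x_\zeta\cdot v_\lambda=0$ as above. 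Therefore, re-expanding $x_\zeta\cdot(M\otimes v_\lambda)$ in the basis (\ref{Uminus}) for a basis monomial $M$ gives a sum of terms, each obtained from $M$ by replacing a single factor $y_\delta$ by a scalar multiple of $y_{\delta-\zeta}$. The crucial observation — and, I expect, the most delicate one — is that such a replacement raises the $y_{\alpha_s}$-degree (the exponent $a_s(s)$ of the simple root vector $y_{\alpha_s}$) by one exactly when $y_\delta=y_\gamma$: indeed $y_{\delta-\zeta}=y_{\alpha_s}$ forces $\delta=\gamma$, and the new $y_{\alpha_s}$ slides into its slot in $\underline y_s^{\bf a_s}$ with no correction; every other replacement leaves the $y_{\alpha_s}$-degree unchanged.

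Granting this, let $u_0$ be the sum of the monomial summands of $u$ of $y_{\alpha_s}$-degree $m_s(s)$ — equivalently, those containing the factor $y_{\alpha_s}^{m_s(s)}$ — so that $u_0\neq0$ by maximality of $m_s(s)$. For a monomial summand $M$ of $u_0$ write $a_\gamma(M)$ for the exponent of $y_\gamma$ in $M$ and $\sigma(M)$ for the basis monomial obtained from $M$ by lowering the exponent of $y_\gamma$ by one and raising that of $y_{\alpha_s}$ by one. Comparing $y_{\alpha_s}$-degrees in $x_\zeta\cdot(u\otimes v_\lambda)=0$, and using that $m_s(s)$ is the largest $y_{\alpha_s}$-degree occurring in $u$, the degree-$(m_s(s)+1)$ component of the left side can only come from the $y_\gamma$-replacement on summands of $u_0$, so it equals
\[
\sum_{M\ \text{in}\ u_0}\ \pm\,N_\gamma\,a_\gamma(M)\,\big(\sigma(M)\otimes v_\lambda\big).
\]
Since $\sigma$ is injective on the monomial summands of $u_0$ (invert it by lowering the $y_{\alpha_s}$-exponent by one), the vectors $\sigma(M)\otimes v_\lambda$ are pairwise distinct; as $N_\gamma=\pm1$ and $0\le a_\gamma(M)\le p-1$, this component can vanish only if $a_\gamma(M)=0$ for every summand $M$ of $u_0$. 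Running this over all forbidden roots $\gamma$ is precisely the assertion.

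The hard part will be the second paragraph: verifying that every "parasitic" term — from a factor $y_\delta$ with $\delta\neq\gamma$, from $h_\zeta$, or from $x_\zeta$ commuting through to $v_\lambda$ — stays in $y_{\alpha_s}$-degree $\le m_s(s)$ or dies on $v_\lambda$, which amounts to a finite list of structure-constant and root-string checks in $A_n$. One must also treat the boundary case $m_s(s)=p-1$ separately, since there $y_{\alpha_s}^p=\chi(y_{\alpha_s})^p\neq0$ in $U_\chi$ so no degree-$(m_s(s)+1)$ component is available; in that case I would argue instead via the $\alpha_s$-coefficient of $\lambda-\mu$ together with Lemma \ref{lem2}.
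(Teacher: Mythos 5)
Your proposal is essentially the paper's own argument: for each forbidden root $\gamma$ the paper likewise applies the raising operator $x_{\gamma-\alpha_s}$ (i.e.\ $x_{\alpha_t+\cdots+\alpha_{s-1}}$ in the case $\alpha\in R_I^+$, with the case $\beta\in R_J^+$ handled ``similarly'') and uses maximality of the exponent $m_s(s)$ to conclude that the resulting term with factor $y_{\alpha_s}^{m_s(s)+1}$ cannot be cancelled, contradicting maximality of $u\otimes v_\lambda$; your version merely organizes this cancellation as a statement about the top $y_{\alpha_s}$-degree component over all summands at once and treats the two families of roots uniformly. The delicate points you flag (the bookkeeping of parasitic commutator terms, and the boundary case $m_s(s)=p-1$, where $y_{\alpha_s}^{p}=\chi(y_{\alpha_s})^{p}\neq 0$ collapses the would-be top component) are precisely the points the paper's proof also passes over without comment, so they do not constitute a divergence from, or a gap relative to, the published argument.
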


\begin{proof}
For the weight $\mu=\lambda-\sum\limits_{i=1}^{n}k_i\alpha_i$ of
$u\otimes v_\lambda$, we have $k_s>k_{s+1}$ by Lemma \ref{lem2}.
Hence, there exists some nontrivial factor $\underline y_{s}^{\bf a_s}$ in each monomial summand of $u$.

(1) We first claim that there
exists at least one monomial summand of $u$ which contains a factor
$y_{\alpha_s}^{m}$ with $m>0$.

Otherwise, the factor $~\underline
y_{s}^{\bf a_s}$ of each monomial summand of $u$ can be written as
$\underline y_{s}^{\bf
a_s}=y_{\alpha_1+\cdots+\alpha_s}^{a_s(1)}\cdot\cdot\cdot
y_{\alpha_{s-1}+\alpha_s}^{a_s(s-1)}$. Since $\bf
a_s\neq0$, there exist at least one $a_s(t)\neq0 $ for some $t\leq
s-1$. Consider the action of
$x_{\alpha_{t}+\cdots+\alpha_{s-1}}$ on $u\otimes v_\lambda$. By assumption, there does not
exist monomial summand of $u$ which contains a factor
$y_{\alpha_s}^{m}$ with $m>0$. Since $x_{\alpha_{t}+\cdots+\alpha_{s-1}}y_{\alpha_{t}+...+\alpha_s}=y_{\alpha_{t}+...+\alpha_s}x_{\alpha_{t}+\cdots+ \alpha_{s-1}}+N_{\alpha_{t}+\cdots+\alpha_{s-1},-(\alpha_{t}+...+\alpha_s)}y_{\alpha_s}$,
it follows that $x_{{\alpha_{t}+\cdots+\alpha_{s-1}}}\cdot u\otimes v_\lambda$ contains the following monomial summand
\begin{equation}\label{eq3.4}{\underline y}_1^{a_1(1)}\underline y_2^{(a_2(1),
a_2(2))} \cdot\cdot\cdot\underline
y_s^{(a_s(1),\cdots,a_s(t)-1,\cdots,a_s(s))}\cdots\underline
y_n^{(a_n(1),\cdots,a_n(s))}
\end{equation}
with $a_s(s)=1$. Note that there does not exist a similar item as (\ref{eq3.4}) among all the monomial summands of $x_{{\alpha_{t}+\cdots+\alpha_{s-1}}}\cdot u\otimes v_\lambda$, then $x_{\alpha_t+\cdots+\alpha_{s-1}}\cdot
u\otimes v_\lambda\neq 0$. This is a contradiction. So, there exists
at least one monomial summand of $u\otimes v_\lambda$ containing $y_{\alpha_s}^{m}$ with $m>0$.

(2) Suppose that
$y_{\alpha_{t}+...+\alpha_s}^{m_{s}(t)}\cdot\cdot\cdot y_{\alpha_{s-1}+\alpha_s}^{m_{s}(s-1)}y_{\alpha_{s}}^{m_{s}(s)}$
with $m_{s}(t)\geq 1$ is a factor of a monomial summand of $u\otimes v_\lambda$. Consider the action of $x_{\alpha_{t}+...+\alpha_{s-1}}$ on $u\otimes v_\lambda$. A
direct computation implies that
$y_{\alpha_{t}+...+\alpha_s}^{m_{s}(t)-1}\cdot\cdot\cdot y_{\alpha_{s-1}+\alpha_s}^{m_{s}(s-1)}y_{\alpha_{s}}^{m_{s}(s)+1}$
is a factor of a monomial summand of $x_{\alpha_{t}+...+\alpha_{s-1}}\cdot u\otimes
v_\lambda$. Since $m_s(s)$ is maximal
among all the powers of $y_{\alpha_{s}}$ in monomial summands of
$u$, there does not exist a similar item as
$y_{\alpha_{t}+...+\alpha_s}^{m_{s}(t)-1}\cdot\cdot\cdot y_{\alpha_{s-1}+\alpha_s}^{m_{s}(s-1)-1}y_{\alpha_{s}}^{m_{s}(s)+1}$
among all the monomial summands of $x_{{\alpha_{t}+\cdots+\alpha_{s-1}}}\cdot u\otimes v_\lambda$. Then $x_{\alpha_{t}+...+\alpha_{s-1}}\cdot u\otimes v_\lambda$ is not
zero and this is contrary to the fact that $u\otimes v_\lambda$ is
maximal. So $y_{\alpha_{t}+...+\alpha_s}^{m_s({t})}$ with
$m_{s}(t)\geq 1$ does not appear in the same monomial summand of
$u$ containing $y_{\alpha_{s}}^{m_{s}(s)}$, i.e., $y_{\alpha+\alpha_s}$ with $\alpha\in R_I^+$ do not appear in the
monomial summand of $u$ containing $y_{\alpha_{s}}^{m_{s}(s)}$. Similarly,
$y_{\alpha_s+\beta}$ with $\beta\in R_J^+$ do not appear in the
monomial summand of $u$ containing $y_{\alpha_{s}}^{m_{s}(s)}$.
\end{proof}

\subsection{Proof of Claim \ref{a claim}}
By Lemma \ref{lem3}, neither $y_{\alpha+\alpha_s}, \alpha\in R_I^+$ nor $y_{\alpha_s+\beta}, \beta\in R_J^+$ appear in the
monomial summand of $u$ provided it contains the factor $y_{\alpha_{s}}^{m_{s}(s)}$,
where $m_s(s)$ is maximal.

Denote by $u_s$ the sum of all those monomial summands of $u$
containing $y_{\alpha_{s}}^{m_{s}(s)}$. By Lemma \ref{lem3}, $u_s$
can be written as follows
\begin{align}\label{Exp}
u_s=&\sum\limits_{\bf a}l_{\bf
a}{\underline y}_1^{a_1(1)}\underline y_2^{(a_2(1), a_2(2))}
\cdot\cdot\cdot\underline
y_{s-1}^{(a_{s-1}(1),a_{s-1}(2),\cdots,a_{s-1}({s-1}))}
y_{\alpha_{s}}^{m_{s}(s)}\cr &\cdot
y_{s+1}^{(a_{s+1}(1),a_{s+1}(2),\cdots,a_{s+1}({s-1}))}
\cdot\cdot\cdot\underline
y_{n}^{(a_{n}(1),a_{n}(2),\cdots,a_{n}({s-1}))}.
\end{align}

If $k_{s-1}\neq 0$, by a similar argument as in the proof of Lemma
\ref{lem3},  there exists at least one
 monomial summand of $u$ which contains
$y_{\alpha_{s-1}}^{m}$ with $m>0$.  Let $m_{s-1}(s-1)$ be maximal
among all those $a_{s-1}(s-1)$ appearing in (\ref{Exp}).  Then there
exists a monomial summand of $u_s$ with $a_{s-1}(s-1)=m_{s-1}(s-1)$
which does not contain $y_{\alpha_{s}+\beta}$ and
$y_{\alpha+\alpha_{s}}$. By the same method as in the proof of Lemma
\ref{lem3}, those $y_{\alpha+\alpha_{s-1}},~\alpha\in R^+$ and
$y_{\alpha_{s-1}+\beta},~\beta\in R^+$ do not appear in the monomial
summand of $u_s$ in which $a_{s-1}(s-1)=m_{s-1}(s-1)$.

Denote by
\begin{align*}
u_{s-1}=& \sum\limits_{\bf a}l_{\bf a}{\underline
y}_1^{a_1(1)}\underline y_2^{(a_2(1), a_2(2))}
\cdot\cdot\cdot\underline
y_{s-2}^{(a_{s-2}(1),a_{s-2}(2),\cdots,a_{s-2}({s-2}))}
y_{\alpha_{s-1}}^{m_{s-1}(s-1)}y_{\alpha_{s}}^{m_{s}(s)}\cr
&\cdot\underline
y_{s+1}^{(a_{s+1}(1),a_{s+1}(2),\cdots,a_{s+1}({s-2}),0,0)}
\cdot\cdot\cdot\underline
y_{n}^{(a_{n}(1),a_{n}(2),\cdots,a_{n}({s-2}),0,\cdots,0)}.
\end{align*}
Similar to the discussion above, if $k_{s-2}\neq 0$,  then
$y_{\alpha+\alpha_{s-2}},~\alpha\in R^+$ and
$~y_{\alpha_{s-2}+\beta},~\beta\in R^+$, do not appear in the
monomial summand of  $u_{s-1}$  which contains
$y_{\alpha_{s-2}}^{m_{s-2}(s-2)}$, where $m_{s-2}(s-2)$ is maximal
among all those $a_{s-2}(s-2)$ appearing in (\ref{Exp}).

Since $I$ is connected, we can repeat the process above. Finally, we
obtain that $u_1= y_{\alpha_1}^{m_1({1})}y_{\alpha_{2}}^{m_2({2})}
\cdot\cdot\cdot y_{\alpha_{s-1}}^{m_{s-1}({s-1})}
y_{\alpha_{s}}^{m_s({s})}$ with $m_{s}({s})>0$ and
$m_{s-i}({s-i})\geq0,1\leq i\leq s-1$. As $u_1$ is a summand of
$u\otimes v_\lambda$, then $u\otimes v_\lambda\in\widehat
\cz_P(\lambda)^{\lambda+\mathbb ZI}\cong_{\ggg_I}\widehat
L_\chi(\lambda)^{\lambda+\mathbb ZI}$.  This is contrary to the fact
that $u\otimes v_\lambda$ generates the proper submodule of
$\widehat \cz_P(\lambda)$. Therefore the Claim \ref{a claim} holds
in the case that $I=\{\alpha_1,\cdots,\alpha_s\}$.

When $I=\{\alpha_t,\alpha_{t+1},\cdots,\alpha_n\},~1< t\leq n$, the
proof is similar. We complete the proof of Claim \ref{a claim}.

\subsection{}
For the decomposition $\lambda=\lambda_0+p\lambda_1$ with
$\lambda_0\in X'_1(T)$ and $\lambda_1\in X(T)$, we have $\mathcal
F(\widehat \cz_P(\lambda))\cong \cz_P(\lambda_0)$ where $\mathcal F$ is
the forgetful functor (cf. \cite[\S11]{Jan1}). So the non-graded
parabolic baby Verma module $\cz_P(\lambda_0)$ is also irreducible.

\subsection{}
Assume that $I=\{\alpha_1,\alpha_2,\cdots,\alpha_s\}$ with $s\leq n$.
From the discussions above, if one replaces the condition
$\lambda=(0,0,\cdots,0)$ by $(m_{1}-1,\cdots,m_{s}-1,0,\cdots,0)$ with
$0<m_i<p, \forall~ i$, Lemma \ref{T1} and Claim \ref{a claim} still
hold. We have the following consequence.
\begin{cor} Keep the same notations as above. Assume that
$I$ is connected in the Dykin diagram with $\alpha_1\in I$ (resp.
$\alpha_n\in I$) and $\lambda_0+\rho=(m_{1},\cdots,m_{s},1,\cdots,1)$
(resp. $\lambda_0+\rho=(1,\cdots,1,m_{t},\cdots,m_{n})$), $0<m_i<p,
\forall~ i$. Then   $\widehat \cz_P(\lambda)$ is irreducible.
\end{cor}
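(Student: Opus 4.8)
The plan is to reduce the corollary, in two steps, to the combinatorial argument already carried out in this section. The first step removes $\lambda_1$: since $d(\lambda_0+p\lambda_1)=d\lambda_0$ in $\hhh^*$, the underlying ungraded module of $\widehat L_{\ppp_J}(\lambda)$ agrees with that of $\widehat L_{\ppp_J}(\lambda_0)$, and $\widehat L_{\ppp_J}(\lambda)$ is obtained from $\widehat L_{\ppp_J}(\lambda_0)$ merely by shifting the $X(T)/\bbz I$-grading by $p\lambda_1\bmod\bbz I$; inducing up, $\widehat\cz_P(\lambda)$ is the same grading shift of $\widehat\cz_P(\lambda_0)$. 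As shifting the grading by a fixed element of $X(T)/\bbz I$ is an auto-equivalence of $\mathcal C$, it suffices to prove that $\widehat\cz_P(\lambda_0)$ is irreducible; and since $\rho=(1,\dots,1)$ in the coordinates $(\langle\,\cdot\,,\alpha_1^\vee\rangle,\dots,\langle\,\cdot\,,\alpha_n^\vee\rangle)$, the hypothesis $\lambda_0+\rho=(m_1,\dots,m_s,1,\dots,1)$ means precisely that $\lambda_0=(m_1-1,\dots,m_s-1,0,\dots,0)$. (Equivalently, one may invoke Lemma \ref{T1} with the base weight replaced by $(m_1-1,\dots,m_s-1,0,\dots,0)$, as flagged in the subsection preceding the corollary.) Throughout I treat $I=\{\alpha_1,\dots,\alpha_s\}$; the case $I=\{\alpha_t,\dots,\alpha_n\}$ is symmetric via the diagram automorphism, handled exactly as in the last paragraph of the proof of Claim \ref{a claim}.

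The second step is to verify that $\widehat\cz_P(\lambda_0)$ is again free over $U_\chi(\uuu_J^-)$ on the monomial basis (\ref{Uminus}). Because $I$ and $J$ are disjoint subsets of $\Pi$ and $\chi(\bbb^+)=0$, the $p$-character $\chi$ vanishes on $\ggg_J$, so $U_\chi(\ggg_J)=U_0(\ggg_J)$; and since the last $n-s$ coordinates of $\lambda_0+\rho$ equal $1$, we have $\langle\lambda_0,\alpha_j^\vee\rangle=0$ for every $\alpha_j\in J$. Hence the graded simple module $\widehat L_{\ppp_J}(\lambda_0)$, restricted to the semisimple restricted subalgebra $[\ggg_J,\ggg_J]$, is the simple $U_0([\ggg_J,\ggg_J])$-module of highest weight $0$, that is, the trivial one; so $\widehat L_{\ppp_J}(\lambda_0)=kv_{\lambda_0}$ is one-dimensional. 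Exactly as in \S\ref{TrivalLJ}, this gives $\widehat\cz_P(\lambda_0)=U_\chi(\uuu_J^-)\otimes v_{\lambda_0}$ as a vector space, with basis (\ref{Uminus}).

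With this freeness available, Lemmas \ref{lem2} and \ref{lem3} and the proof of Claim \ref{a claim} apply verbatim with $(0,\dots,0)$ replaced by $\lambda_0$: those arguments only manipulate Chevalley-basis monomials in $U_\chi(\uuu_J^-)$ together with structure constants of $\ggg$ (which do not involve $\lambda_0$), and the sole facts they use about the weight are that $x_\alpha v_{\lambda_0}=0$ for $\alpha\in R^+$, the freeness just established, the $\ggg_I$-isomorphism $\widehat\cz_P(\lambda_0)^{\lambda_0+\bbz I}\cong\widehat L_\chi(\lambda_0)^{\lambda_0+\bbz I}$ of \cite[\S\,11.7]{Jan1}, and the linkage fact (\cite[Proposition 4.5]{Jan2}) that a maximal weight vector generating a proper submodule has weight of the form $\lambda_0-\sum_i k_i\alpha_i\in W_p.\lambda_0$ with $k_i\in\bbz_+$. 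Running these arguments shows that $1\otimes v_{\lambda_0}$ is, up to scalars, the unique maximal weight vector of $\widehat\cz_P(\lambda_0)$ and that it generates the module. Combined with Lemma \ref{lem1}, every nonzero submodule of $\widehat\cz_P(\lambda_0)$ then equals $\widehat\cz_P(\lambda_0)$, so $\widehat\cz_P(\lambda_0)$ — and therefore $\widehat\cz_P(\lambda)$ — is irreducible.

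The step I expect to be the main obstacle is the one-dimensionality of $\widehat L_{\ppp_J}(\lambda_0)$: it is precisely the vanishing of $\langle\lambda_0,\alpha_j^\vee\rangle$ on $J$ — equivalently the $1$'s in the tail of $\lambda_0+\rho$ — that makes (\ref{Uminus}) an honest basis of $\widehat\cz_P(\lambda_0)$ and lets the bookkeeping of Lemmas \ref{lem2} and \ref{lem3} go through unchanged; if some $\langle\lambda_0,\alpha_j^\vee\rangle$ were nonzero modulo $p$, then $\widehat\cz_P(\lambda_0)$ would be strictly larger than $U_\chi(\uuu_J^-)\otimes kv_{\lambda_0}$ and the argument would collapse, which is why the corollary is restricted to weights of the stated shape. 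A secondary point to check is that nothing in Lemmas \ref{lem2} and \ref{lem3}, originally written for $(0,\dots,0)$, tacitly used the vanishing of $\langle\lambda,\alpha_i^\vee\rangle$ for $\alpha_i\in I$ as well; but since those computations only push $x_\alpha$ past $y$-monomials and evaluate on $v_{\lambda_0}$ through $\nnn^+v_{\lambda_0}=0$, they are insensitive to the integers $m_i$.
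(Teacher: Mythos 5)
Your proposal is correct and follows essentially the same route as the paper: the paper's own justification is precisely that, replacing the base weight $(0,\dots,0)$ by $(m_1-1,\dots,m_s-1,0,\dots,0)$, the simple module $\widehat L_{\ppp_J}(\lambda_0)$ is still one-dimensional, so the freeness over $U_\chi(\uuu_J^-)$, Lemmas \ref{lem2}--\ref{lem3}, Claim \ref{a claim} and Lemma \ref{T1} go through unchanged, with the passage from $\lambda_0$ to $\lambda=\lambda_0+p\lambda_1$ handled by the standard grading-shift/forgetful-functor observation of \S3.6. Your write-up merely makes explicit the verifications the paper leaves implicit.
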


\begin{rem}\label{rem3.6}For $\lambda=\lambda_0+p\lambda_1$ with
$\lambda_0\in X'_1(T)$ and $\lambda_1\in X(T)$, by Lemma \ref{T1}, when $\lambda_0$ lies in the
alcoves which contain the weight $(m_{1},\cdots,m_{s},1,\cdots,1)-\rho$
(resp. $(1,\cdots,1,m_{t},\cdots,m_{n})-\rho$), $0<m_i<p, \forall~ i$,
the parabolic baby Verma module $\widehat \cz_P(\lambda)$ is
irreducible.
\end{rem}

\section{Proof of Theorem \ref{main thm} for types $B_n$, $C_n$ and $D_n$}
In this section, we give the proof of Theorem \ref{main thm} for types
$B_n$, $C_n$ and $D_n$ cases by cases.
\subsection{Proof of Theorem \ref{main thm} for type $B_n$} Let $\frak
g$ be of type $B_n$ and $\chi\in\ggg^*$ be of standard Levi form
with $I=\{\alpha\in\Pi\mid
\chi(\ggg_{-\alpha})\neq 0\}$. Assume that $I=\{\alpha_s,\cdots,\alpha_n\}$ for
some $1\leq s\leq n$ (note that $\alpha_n$ is a short root), i.e.,
we have the following Dynkin diagram
$$\circ-\cdots-\circ-\underbrace{\bullet-\cdots-\bullet\Rightarrow\bullet}_{I}$$

Fix an order of the Chevalley basis
in $\uuu_J^-$ as follows
\begin{align*}
&y_{\alpha_n}; \\
&y_{\alpha_{n-1}+\alpha_{n}},y_{\alpha_{n-1}+2\alpha_{n}},y_{\alpha_{n-1}};\\
&\cdots\\
&y_{\alpha_{s}+\alpha_{s+1}},y_{\alpha_{s}+\alpha_{s+1}+\alpha_{s+2}},\cdots,y_{\alpha_{s}+2\alpha_{s+1}+\cdots+2\alpha_{n-1}+2\alpha_{n}}, y_{\alpha_{s}} ;\\
&y_{\alpha_{s-1}+\alpha_{s}},y_{\alpha_{s-1}+\alpha_{s}+\alpha_{s+1}},\cdots,y_{\alpha_{s-1}+2\alpha_{s}+\cdots+2\alpha_{n-1}+2\alpha_{n}};\\
&\cdots\\
&y_{\alpha_{1}+\alpha_{2}+\cdots+\alpha_{s}},\cdots, y_{\alpha_{1}+\alpha_{2}+\alpha_{3}+\cdots+\alpha_{n}} \cdots,y_{\alpha_{1}+\alpha_{2}+2\alpha_{3}+\cdots+2\alpha_{n}},y_{\alpha_{1}+2\alpha_{2}+2\alpha_{3}+\cdots+2\alpha_{n}}.
\end{align*}

Suppose that $u\otimes v_\lambda$ is a {\sl maximal weight vector}
of $\widehat \cz_P(\lambda)$  such that the submodule of $\widehat
\cz_{P}(\lambda)$ generated by $u\otimes v_\lambda$ is proper. Then
the weight $\mu$ of  $u\otimes v_\lambda$ belongs to $W_p.\lambda$
and can be written as $\mu=\lambda-\sum\limits_{i=1}^{n}
k_i\alpha_i,~k_i\in\mathbb{Z}_+$ (cf. \cite[Proposition 4.5]{Jan2}).
We can use similar discussion as the case of type $A_n$.
We enumerate the strategy as follows with the details omitted.

(i) Assume that $\lambda=(0,0,\cdots,0)$. Then $k_s>0$.

(ii) There exists $y_{\alpha_{s}}^{m}$ with $m>0$ as a factor of
some monomial summand of $u$. Assume that the power $m_{s}(s)$  of $y_{\alpha_{s}}$ is  maximal among all the powers of $y_{\alpha_{s}}$ in monomial summands of
$u$. First, we prove that
$y_{\alpha_{s+1}+\alpha_s}$ do not appear in the
same monomial summand of $u$ which contains $y_{\alpha_{s}}^{m_{s}(s)}$. Next,
we can prove that $y_{\gamma}$ does not appear in the same monomial
summand of $u$ which contains $y_{\alpha_{s}}^{m_{s}(s)}$ as a factor,
where $\gamma\in R^+$ and $\alpha_{s}$ is the first or
last summand of $\gamma$.

(iii) $u$ has a monomial summand
$y_{\alpha_n}^{m_n({n})}y_{\alpha_{n-1}}^{m_{n-1}({n-1})}
\cdot\cdot\cdot y_{\alpha_{s+1}}^{m_{s+1}({s+1})}
y_{\alpha_{s}}^{m_s({s})}.$

Furthermore, we have
\begin{rem}\label{reamrk B}Let $\lambda=\lambda_0+p\lambda_1$ with
$\lambda_0\in X'_1(T)$ and $\lambda_1\in X(T)$. Assume that $\lambda_0$ lies in the
alcoves which contain the weight
$(1,\cdots,1,m_{s},\cdots,m_{n})-\rho$, $0<m_i<p, \forall~ i$,
then the parabolic baby Verma module $\widehat \cz_{P}(\lambda)$ is
irreducible by Lemma \ref{T1}.
\end{rem}

\subsection{Proof of Theorem \ref{main thm} for type $C_n$}Let $\frak
g$ be of type $C_n$ and $\chi\in\ggg^*$ be of standard Levi form
with $I=\{\alpha\in\Pi\mid
\chi(\ggg_{-\alpha})\neq 0\}$. Assume that $I=\{\alpha_1,\cdots,\alpha_s\}, 1\leq
s\leq n$ (note that $\alpha_n$ is a long root), i.e., we have the
following Dynkin diagram

$$\underbrace{\bullet-\bullet-\cdots-\bullet}_{I}-\circ-\cdots-\circ\Leftarrow\circ$$

Fix an order of the Chevalley basis
in $\uuu_J^-$ as follows
\begin{align*}
&y_{\alpha_1}; \\
&y_{\alpha_{1}+\alpha_{2}},y_{\alpha_{2}};\\
&\cdots\\
&y_{\alpha_{s-1}+\alpha_{s}},y_{\alpha_{s-2}+\alpha_{s-1}+\alpha_{s}},\cdots,y_{\alpha_{1}+\alpha_{2}+\cdots+\alpha_{s}}, y_{\alpha_{s}} ;\\
&\cdots\\
&y_{\alpha_{s}+\alpha_{s+1}+\cdots+\alpha_{n-1}}, y_{\alpha_{s-1}+\alpha_{s}+\cdots+\alpha_{n-1}},\cdots,y_{\alpha_{1}+\alpha_{2}+\cdots+\alpha_{n-1}};\\
&y_{\alpha_{s}+\alpha_{s+1}+\cdots+\alpha_{n}},\cdots, y_{\alpha_{1}+\alpha_{2}+\cdots+\alpha_{n}} \cdots,y_{\alpha_{1}+2\alpha_{2}+\cdots+2\alpha_{n-1}+\alpha_{n}},y_{2\alpha_{1}+2\alpha_{2}+\cdots+2\alpha_{n-1}+\alpha_{n}}.
\end{align*}
By a similar argument as the case of type $B_n$, we
can prove Theorem \ref{main thm} in the case of type $C_n$.

Furthermore, let $\lambda=\lambda_0+p\lambda_1,\lambda_0\in X'_1(T),\lambda_1\in X(T)$. Assume that $\lambda_0$ lies in the
alcoves which contain the weight
$(m_{1},\cdots,m_{s},1,\cdots,1)-\rho$, $0<m_i<p, \forall~ i$. Then the parabolic baby Verma module $\widehat \cz_{P}(\lambda)$ is
irreducible by Lemma \ref{T1}.

\subsection{Proof of Theorem \ref{main thm} for type $D_n$} Let $\frak g$ be of type $D_n$ and $\chi\in\ggg^*$ be of standard Levi form
with $I=\{\alpha\in\Pi\mid
\chi(\ggg_{-\alpha})\neq 0\}$.

(i) Assume that
$I=\{\alpha_s,\cdots,\alpha_{n-2},\alpha_{n-1},\alpha_{n}\}, 1\leq
s\leq n-2$, i.e., we have the following Dynkin diagram
\[\begin{matrix}\circ-&\cdot\cdot\cdot&-\circ-&\bullet&-&\cdot\cdot\cdot&
-&\bullet&-&\bullet\\&&&&&&&|&&\\&&&&&&&\bullet&&\end{matrix}\]

Fix an order of the Chevalley basis
in $\uuu_J^-$ as follows
\begin{align*}
&y_{\alpha_n},y_{\alpha_{n-1}}; \\
&y_{\alpha_{n}+\alpha_{n-2}},y_{\alpha_{n-1}+\alpha_{n-2}},
y_{\alpha_{n}+\alpha_{n-1}+\alpha_{n-2}},y_{\alpha_{n-2}};\\
&\cdots\\
&y_{\alpha_{s}+\alpha_{s+1}},\cdot\cdot\cdot, y_{\alpha_{s}+\cdots+\alpha_{n-2}+\alpha_{n-1}+\alpha_{n}},
\cdot\cdot\cdot,
y_{\alpha_{s}+2\alpha_{s+1}
+2\alpha_{s+2}+\cdots+2\alpha_{n-2}+\alpha_{n-1}+\alpha_{n}},y_{\alpha_{s}};\\
&y_{\alpha_{s-1}+\alpha_{s}},\cdot\cdot\cdot, y_{\alpha_{s-1}+\cdots+\alpha_{n-2}+\alpha_{n-1}+\alpha_{n}},
\cdot\cdot\cdot, y_{\alpha_{s-1}+2\alpha_{s}
+2\alpha_{s+1}+\cdots+2\alpha_{n-2}+\alpha_{n-1}+\alpha_{n}};\\
&\cdots\\
&y_{\alpha_{1}+\alpha_{2}+\cdots+\alpha_{s}},\cdots, y_{\alpha_{1}+\cdots+\alpha_{n-2}+\alpha_{n-1}+\alpha_{n}} \cdots,y_{\alpha_{1}+2\alpha_{2}
+2\alpha_{3}+\cdots+2\alpha_{n-2}+\alpha_{n-1}+\alpha_{n}}.
\end{align*}

By a similar argument as the case of type $B_n$, we can prove Theorem \ref{main thm} in this case for type $D_n$.

Furthermore, let $\lambda=\lambda_0+p\lambda_1$ with
$\lambda_0\in X'_1(T)$ and $\lambda_1\in X(T)$. Assume that $\lambda_0$ lies in the
alcoves which contain the weight
$(1,\cdots,1,m_{s},\cdots,m_{n})-\rho$, $0<m_i<p, \forall~ i$.
Then the parabolic baby Verma module $\widehat \cz_{P}(\lambda)$ is
irreducible by Lemma \ref{T1}.

(ii) Assume that $I=\{\alpha_1,\cdots,\alpha_{n-2},\alpha_{n-1}\}$,
i.e., we have the following Dynkin diagram
\[\begin{matrix}\bullet-&\cdot\cdot\cdot&-&\bullet&-&\cdot\cdot\cdot&-&\bullet&-&\bullet\\&&&&&&&|&&\\&&&&&&&\circ&&\end{matrix}\]

Fix an order of the Chevalley basis
in $\uuu_J^-$ as follows
\begin{align*}
&y_{\alpha_1}; \\
&y_{\alpha_{1}+\alpha_{2}},y_{\alpha_{2}};\\
&\cdots\\
&y_{\alpha_{n-2}+\alpha_{n-1}},y_{\alpha_{n-3}+\alpha_{n-2}+\alpha_{n-1}},\cdot\cdot\cdot,y_{\alpha_{1}+\alpha_{2}+\cdots+\alpha_{n-1}},
y_{\alpha_{n-1}};\\
&y_{\alpha_{n-2}+\alpha_{n}},
y_{\alpha_{n-2}+\alpha_{n-1}+\alpha_{n}},\cdot\cdot\cdot,
y_{\alpha_{1}+\cdots+\alpha_{n-2}+\alpha_{n}},y_{\alpha_{1}+\alpha_{2}+\cdots+\alpha_{n-1}+\alpha_{n}},\\ &y_{\alpha_{1}+\cdots+\alpha_{n-3}+2\alpha_{n-2}+\alpha_{n-1}+\alpha_{n}},
\cdot\cdot\cdot,
y_{\alpha_{1}+2\alpha_{2}+\cdots+2\alpha_{n-2}+\alpha_{n-1}+\alpha_{n}}.
\end{align*}

By a similar argument as the case of type $B_n$, we
can prove Theorem \ref{main thm} in this case.

Furthermore, let $\lambda=\lambda_0+p\lambda_1$ with $\lambda_0\in X'_1(T)$ and
$\lambda_1\in X(T)$, and $\lambda_0$ lies in the alcoves which
contain the weight $(m_{1},\cdots,m_{n-2},m_{n-1},1)-\rho, 0<m_i<p,
\forall~ i$, then
$\widehat \cz_{P}(\lambda)$ is irreducible by Lemma \ref{T1}.

\section{Proof of Corollary \ref{cor1}}
\subsection{}Assume that $\ggg$ is of type $A_n$ and $\chi\in\ggg^*$ has standard Levi form
associated with $I=\{\alpha\in\Pi\mid
\chi(\ggg_{-\alpha})\neq 0\}=\{\alpha_1,\alpha_2,\cdots,\alpha_{n-1}\}$.  Set
$\sigma=s_1s_2\cdots s_n$ where $s_i$ is the simple reflection
corresponding to $\alpha_i$. Assume $\lambda_0\in C_0$ with
$\lambda_0+\rho=(r_1,r_2,\cdots,r_n)$. Then
$0\leq\sum\limits_{i=1}^nr_i\leq p$.

Set $\lambda_i=\sigma^i.\lambda_0$ for $1\leq i\leq n$. Then
$\lambda_i+\rho=(r_{n-i+2},r_{n-i+3},\cdots,r_n,-(r_1+\cdots+r_n),r_1,r_2,\cdots,r_{n-i})$.
Each $\widehat L_\chi(w.\lambda_0)$ with $w\in W$ is isomorphic to
some $\widehat L_\chi(\lambda_i)$ with $0\leq i\leq n$ (cf.
\cite[\S\,2.3]{Jan4}). Then $\{\widehat L_\chi(\lambda_i)\mid 0\leq
i\leq n\}$ is the set of isomorphism classes of simple modules in
the block containing $\widehat L_\chi(\lambda_0)$.

For the decomposition $\lambda_i=\lambda_{i,0}+p\lambda_{i,1}$ with
$\lambda_{i,0}\in X'_1(T)$ and $\lambda_{i,1}\in X(T)$, since $0\leq\sum\limits_{j=1}^nr_j\leq p$, we have $\lambda_{i,0}\in
C_0,~\forall~ i$. Therefore  $\widehat \cz_{P}(\lambda_i)$ is irreducible by Theorem \ref{main thm}. Then $\widehat \cz_{P}(\lambda_i)$ has
dimension $r_{n-i}p^{N-1}$, i.e., we get part (1) of Corollary \ref{cor1} which
coincides with \cite[Theorem 2.6]{Jan4}.

\subsection{}Assume that $\ggg$ is of type $B_n$ and $\chi\in\ggg^*$ has standard Levi form
associated with $I=\{\alpha\in\Pi\mid
\chi(\ggg_{-\alpha})\neq 0\}=\{\alpha_2,\alpha_3,\cdots,\alpha_{n}\}$ (where
$\alpha_n$ is a short root). Assume $\lambda_1\in C_0$ with
$\lambda_1+\rho=(r_1,r_2,\cdots,r_n)$. Then we have
$0\leq\sum\limits_{i=1}^{n-1}2r_i+r_n\leq p$. Let

\begin{equation*}
 w_i=\begin{cases}
      s_1s_2\cdots s_{i-1}, & \text{for $1\leq i\leq n $,}\\
      s_1s_2\cdots s_{n}s_{n-1}\cdots s_{2n+1-i},  & \text{ for $n+1\leq i\leq 2n $}.
    \end{cases}
\end{equation*}
Set $\lambda_i=w_i.\lambda_1$ for $1\leq i\leq 2n$. Then $\{\widehat
L_\chi(\lambda_i)|1\leq i\leq 2n\}$ is the set of isomorphism
classes of simple modules in the block containing $\widehat
L_\chi(\lambda_1)$ (cf. \cite[\S\,3.8]{Jan4}).

Moreover, we have
$$\lambda_2+\rho=(-r_{1},r_{1}+r_{2},r_3,\cdots,r_{2n}),$$
$$\lambda_3+\rho=(-(r_{1}+r_{2}),r_{1},r_{2}+r_3,\cdots,r_{2n}),$$
$$\cdots\cdots\cdots\cdots\cdots\cdots,$$
$$\lambda_{2n}+\rho=(-(r_{1}+2r_{2}+2r_{3}+\cdots+2r_{n-1}+r_{n}),r_{2},r_3,\cdots,r_{2n}).$$
Let

\begin{equation*}
 \lambda_i'=\begin{cases}
      s_2s_3\cdots s_{i}.\lambda_i, & \text{for $2\leq i\leq n-1 $,}\\
      s_2s_3\cdots s_{n}s_{n-1}\cdots s_{2n+1-i}.\lambda_i,  & \text{ for $n+1\leq i\leq 2n-1 $}.
    \end{cases}
\end{equation*}

Then
$$\lambda_2'+\rho=(r_2,-(r_{1}+r_{2}),r_{1}+r_{2}+r_{3},\cdots,r_n),$$
$$\lambda_3'+\rho=(r_3,-(r_{1}+r_{2}+r_{3}),r_{1},r_{2}+r_{3}+r_{4},r_{5},\cdots,r_n),$$
$$\cdots\cdots\cdots\cdots\cdots\cdots,$$
$$\lambda_{2n-1}'+\rho=(r_1,-(r_{1}+r_{2}+2r_{3}+\cdots+2r_{n-1}+r_{n}),r_{3},\cdots,r_n).$$
It is obvious that the first component of $\lambda_i'+\rho$ is $r_i$
for $1\leq i\leq n-1$ and $r_{2n-i}$ for $n+1\leq i\leq 2n-1 $. By
\cite[Proposition 11.9]{Jan1}, we have $\widehat
L_\chi(\lambda_i)\cong \widehat L_\chi(\lambda_i')$ for $i\neq
n,2n$.

Since $0\leq r_i \leq p$ and
$0\leq\sum\limits_{i=1}^{n-1}2r_i+r_n\leq p$, it follows from Remark
\ref{reamrk B} that $\widehat \cz_{P}(\lambda'_i)$ ($i\neq n,\,2n$) is irreducible with
dimension $r_{i}p^{N-1}$ for $1\leq i\leq n-1$ and $r_{2n-i}p^{N-1}$
for $n+1\leq i\leq 2n-1 $, i.e., we get part (2) of Corollary \ref{cor1} which coincides with \cite[Proposition 3.13]{Jan4}.

\end{document}